\documentclass[11pt]{article}
\usepackage{amssymb,amsmath,amsthm}
\usepackage[margin=1.0in]{geometry}
\usepackage{tikz}
\usepackage{pgfplots}
\usepackage{pgfplotstable}
\usepackage[hypcap=true]{subcaption}
\usepackage{listings}
\newtheorem{theorem}{Theorem}
\newtheorem{corollary}{Corollary}[theorem]

\author{Robert C.~Kirby \and Daniel Shapero}

\date{\today}
\title{High-order bounds-satisfying approximation of partial differential equations via finite element variational inequalities}

\begin{document}
\maketitle

\begin{abstract}
  Solutions to many important partial differential equations satisfy bounds constraints,
  but approximations computed by finite element or finite difference methods typically fail to respect the same conditions.
  Chang and Nakshatrala~\cite{chang2017variational} enforce such bounds in finite element methods through the solution of variational inequalities rather than linear variational problems.
  Here, we provide a theoretical justification for this method, including higher-order discretizations.
  We prove an abstract best approximation result for the linear variational inequality and estimates showing that bounds-constrained polynomials provide comparable approximation power to standard spaces.
  For any unconstrained approximation to a function, there exists a constrained approximation which is comparable in the $W^{1,p}$ norm.

  In practice, one cannot efficiently represent and manipulate the entire family of bounds-constrained polynomials, but applying bounds constraints to the coefficients of a polynomial in the Bernstein basis guarantees those constraints on the polynomial.
  Although our theoretical results do not guaruntee high accuracy for this subset of bounds-constrained polynomials, numerical results indicate optimal orders of accuracy for smooth solutions and sharp resolution of features in convection-diffusion problems, all subject to bounds constraints.

\end{abstract}

\section{Introduction}
Finite elements provide a powerful, theoretically robust tool kit for the numerical approximation of partial differential equations (PDE).
Frequently, the exact solution of the PDE satisfies not only a variational problem/operator equation, but also some kind of inequality such as a maximum principle.
However, discretizations of the PDE typically do not satisfy these inequalities.
Even for the Poisson equation with piecewise linear elements, a discrete maximum principle is known to be quite delicate and need not hold~\cite{draganescu2005failure}.
The situation is frequently worse with higher-order discretizations or in the presence of more complex operators such as variable coefficients or convective terms.

In many applications, the bounds represent hard constraints, and numerical methods must respect them for the overall computation to give stable and physically relevant results.
In this paper, we build on an approach introduced in~\cite{chang2017variational,layton1996oscillation} for finite element approximations of convection-diffusion problems, in which a linear variational problem is replaced by a variational inequality that finds the closest point to the solution subject to the bounds constraints.
The well-posedness of this approach was established in~\cite{chang2017variational}
and has been applied to porous media problems~\cite{cheng2022scalable, yang2019fully}, and scalable solvers are known~\cite{zhao2023parallel}.

Some theoretical justification of this approach for the convection-diffusion problem with piecewise linear approximations was given in~\cite{layton1996oscillation}.
Here, we give a more general treatment and consider extension to higher orders of approximation.
We give an abstract best approximation theorem, adapting a result of Falk~\cite{falk1974error} for discrete variational inequalities.
Further, extending a result from Despr{\'e}s~\cite{despres2017polynomials}, we show that bounds-constrained approximations can have comparable approximation to the best approximation in $W^{1,p}$.
These theoretical results suggest that solving discrete variational inequalities can produce numerical solutions combining high resolution of features with hard bounds preservation.
Although these results apply to approximation of any order, 
by imposing the bounds constraints on the Bernstein control net, we can guarantee the resulting polynomial will satisfy the bounds constraint.
This does not produce all bounds-satisfying polynomials, and to what degree the theoretical approximating power is reduced is unclear, but our numerical results indicate a major gain over low-order approximations.

We note that the Bernstein basis has also been used to produce higher-order methods for conservation laws via limiting and/or flux correction~\cite{kuzmin2020subcell,lohmann2017flux}.
Our method can be applied in broader contexts and is independent of the particular kind of problem being solved.
Other problem-agnostic methods~\cite{allen2022bounds,zala2022convex} involve some kind of post-processing, such as solving a convex optimization problem, and the technique in~\cite{allen2022bounds} also relies on properties of the Bernstein basis.
We also note the solution of variational inequalities replaces linear problems with more challenging nonlinear ones, but perhaps this is not surprising in light of Godunov's theorem~\cite{godunov1959finite}.
In the more-relevant finite element context, Nochetto and Wahlbin~\cite{nochetto2002positivity} give an impossibility result for linear operators that preserve positivity with high accuracy.
Hence, higher-order bounds-preserving methods must resort to nonlinear processes, such as the variational inequalities we consider here.

In Section~\ref{sec:formulation}, we present the abstract formulation of our problems and give examples of diffusion and convection-diffusion PDEs in this framework.
Then, we give an abstract best approximation result in Section~\ref{sec:approx} and
establish the approximation power of bounds-constrained polynomials in Section~\ref{sec:bounds}, where we also describe the Bernstein basis and sufficient conditions for bounds constraints in that basis.
Finally, we give numerical results in Section~\ref{sec:results} and concluding remarks in Section~\ref{sec:conc}.

\section{Problem formulation}
\label{sec:formulation}
\subsection{Abstract setting}
Let $V$ be a Hilbert space.  Let $a:V \times V \rightarrow \mathbb{R}$ be a continuous and coercive bilinear form.  That is, there exists constants $\alpha, C > 0$ such that
\begin{equation}
  \label{eq:cont}
  a(u, v) \leq C \left\| u \right\| \left\| v \right\|
\end{equation}
for all $u, v \in V$ and
\begin{equation}
  \label{eq:coercive}
  a(u, u) \geq \alpha \left\| u \right\|^2
\end{equation}
for all $u \in V$.

We let $F \in V^\prime$ be a bounded linear functional and consider the variational problem of finding $u \in V$ such that
\begin{equation}
  \label{eq:vareq}
  a(u, v) = F(v) 
\end{equation}
for all $v \in V$.
Existence, uniqueness, and stability are known under the given assumptions thanks to the Lax-Milgram Lemma.

For many PDEs, solutions satisfy additional properties, such as nonnegativity or maximum principles.
To this end, we suppose that $U \subseteq V$ is a closed and convex subset of $V$ such that the solution $u$ to \eqref{eq:vareq} is known to lie in $U$.
However, this inclusion is not implied directly by the variational problem, and need not be respected under discretization.

Following~\cite{chang2017variational}, we can formulate a discrete problem that approximates the solution to~\eqref{eq:vareq} while respecting the bounds constraints as a variational \emph{inequality} rather than linear variational problem.

We take $V_h \subseteq V$ to be some finite-dimensional subspace of $V$, and $U_h \subseteq V_h$ a closed, convex subset.
For our purposes, we also suppose that $U_h \subseteq U$ as well.

Now, we pose the variational inequality of finding $u_h \in U_h$ such that 
\begin{equation}
  \label{eq:varineq}
  a(u_h, v_h - u_h) \geq F(v_h - u_h)
\end{equation}
for all $v_h \in U_h$.
Standard theory~\cite{ciarlet2002finite} implies a unique solution to the discrete problem.

We may characterize approaches to enforcing bounds constraints for discretized PDEs according to two criteria.
First, some approaches are monolithic, integrated into the method itself, while other approaches rely on post-processing the solution.  
Second, we may classify approaches as to whether they are generic or specific to particular classes of problem.
Our technique would be classified as monolithic and generic -- we solve a discrete problem that guarantees constraint satisfaction and can be applied whenever there is an underlying variational problem.
In contrast, techniques such as~\cite{ern2022invariant,kuzmin2020subcell,lohmann2017flux} incorporate the bounds constraints into the discretization, but seem to rely on the structure of particular classes of problems (e.g. conservation laws).
Alternatively, work such as~\cite{allen2022bounds, zala2022convex} gives generic tools independent of the problem structure.
Once one has executed some numerical algorithm, one can then postprocess that solution via some optimization problem to produce a nearby solution satisfying bounds constraints.

\emph{Time-dependent} problems also present challenges in bounds preservation.
The setting we develop here for steady problems can be applied to simple approaches to time stepping.
Now, we consider a Hilbert space $H$ endowed with inner product $(\cdot, \cdot)$ with $V$ compactly embedded in $H$ in turn compactly embedded in $V^\prime$.  In many applications, $V$ will be a subspace of $H^1$ and $H$ will be $L^2$.

Fix $T > 0$ and consider $F(t): [0, T] \rightarrow V^\prime$ representing a time-dependent linear functional.
Then, we consider the evolution equation seeking
$u : (0, T] \rightarrow V$ such that 
\begin{equation}
    \left( u_t , v \right) + a\left( u, v \right) = F(t; v),
\end{equation}
for all $v \in V$.  We impose the initial condition
\begin{equation}
  \label{eq:evolutionic}
  u(\cdot, 0) = u_0 \in V.
\end{equation}

We first discretize in time using some $A$-stable implicit scheme to arrive a sequence of variational problems on $V$.
For example, if we partition $[0, T]$ into some $N$ intervals of size $\tau = T / N$ and define $t_i = i \tau$, we seek a sequence
$\{ u_i \}_{i=1}^N \subset V$ such that:
\begin{equation}
  \left( \frac{u_i - u_{i-1}}{\tau} , v \right)
  + a \left( u_i , v \right) = F(t_i; v).
\end{equation}
Of course, each $u_i$ can be computed from $u_{i-1}$, and $u_0$ is given as an initial condition.
Each variational problem may be rewritten as
\begin{equation}
  \label{eq:vpforbe}
  b(u_i, v) = G_i(v),
\end{equation}
where
\begin{equation}
  \begin{split}
    b(u, v) & = \left( u , v \right) + \tau  a\left( u, v \right) \\
    G_i(v) & = \left( u_{i-1}, v \right) +  \tau  F(t_i; v),
  \end{split}  
\end{equation}

Assuming $a$ is bounded and coercive, the variational problem for each time step will be as well (with constants depending on $\tau$).
We obtain a traditional fully discrete method by replacing $u_0$ with some $u_{h, 0} \in V_h$ and then approximating~\eqref{eq:vpforbe} with a finite element approximation $u_{h, i} \in V_h$ such that
\begin{equation}
  b(u_{h, i}, v_h) = G_i(v_h), \ \ \ v_h \in V_h.
\end{equation}

Alternatively, we can enforce bounds constraints at each time by replacing this discrete variational problem with an inequality as above.
If the true solution of the evolution equation is known to lie in some closed convex set $U$, we can approximate this with $U_h$ as above and then define $u_{h, i} \in U_h$ by the solution of the discrete variational inquality
\begin{equation}
  b(u_{h, i}, v_h - u_{h, i}) \geq G_i(v_h - u_{h, i}).
\end{equation}

We regard time-dependence as an application of our techniques and do not delve deeply into formulating general bounds-constrained methods or error estimates.
The latter topic we anticipate to be accessible using the results we develop here for stationary problems combined with standard techniques like Gr\"onwall-type estimates.

\subsection{Examples}
Finite element methods already exhibit the key issue of constraint violation for relatively simple problems.
Consider an elliptic equation
\begin{equation}
  \label{eq:ellipticPDE}
  - \nabla \cdot \left( \kappa \nabla u \right) = f,
\end{equation}
posed over a domain $\Omega \subset \mathbb{R}^d$ with $d=1, 2, 3$.
The coefficient $\kappa$ is generically a mapping from $\Omega$ into symmetric tensors, uniformly positive-definite over $\Omega$.

Partitioning the boundary $\partial \Omega$ into $\Gamma^D \cap \Gamma^N$, we impose boundary conditions
\begin{equation}
  \begin{split}
    u & = g, \ \ \ \Gamma^D, \\
    - \kappa \nabla u \cdot n & = \gamma \ \ \ \Gamma^N,
  \end{split}
\end{equation}
where $n$ is the unit normal outward to $\Omega$.

Much is known about maximum principles for this problem~\cite{evans2022partial, mudunuru2016enforcing}.
In particular, for positive $f$ and $\gamma=0$ or $\Gamma^N=\emptyset$, the solution $u$ will be bounded below by the minimum of $g$.

To arrive at a variational form of this equation, we let $H^1(\Omega)$ be the standard Sobolev space of square-integrable functions with square-integrable first-order weak derivatives.
Let $V$ be the subspace of $H^1_0(\Omega)$ consisting of functions whose trace vanishes on $\Gamma^D$.
Then, we seek find $u \in H^1(\Omega)$ with trace equal to $g$ on $\Gamma^D$ such that
\begin{equation}
  a(u, v) = F(v), \ \ \ v \in V,
\end{equation}
where
\begin{equation}
  \label{eq:diffvar}
  \begin{split}
    a(u, v) & = \int_\Omega \kappa \nabla u \cdot \nabla v \, dx, \\ 
    F(v) & = \int_\Omega f v \, dx + \int_{\Gamma^n} \gamma v \, ds.
  \end{split}
\end{equation}

We introduce a triangulation $\mathcal{T}_h$ of $\Omega$~\cite{brenner2008mathematical} and let $V_h$ consist of continuous piecewise polynomials of some degree $k$ over that triangulation.
Then, a standard Galerkin method follows by restricting the variational problem to the finite element subspace, producing $u_h \in V_h$ such that
\begin{equation}
a(u_h, v_h) = F(v_h), \ \ \ v_h \in V_h.
\end{equation}

In order to define a discrete variational inequality, we need to specify the set $U_h$ satisfying the bounds constraints and then solve~\eqref{eq:varineq} over that set with the specified $a$ and $F$ from~\eqref{eq:diffvar}.
For continuous piecewise linear elements, defining $U_h$ is straightforward -- it consists of members of $V_h$ whose nodal values satisfy the bounds constraints.
We are interested, however, in using higher-order approximations, and discuss construction of $U_h$ in this case in the sequel.

Bounds constraints are even more relevant to convection-diffusion problems, where methods frequently include at least mild oscillations around fronts.
We consider the equation
\begin{equation}
  \label{eq:convdiff}
  - \nabla \cdot \left( \kappa \nabla u \right) + \beta \cdot \nabla u = f,
\end{equation}
where $\kappa$ is as before and $\beta$ may vary spatially but is assumed divergence-free.

As with the diffusive equation, we partition $\partial \Omega$ into $\Gamma^D \cap \Gamma^N$.
The Neumann boundary $\Gamma^N$ is further partitioned into outflow and inflow portions $\Gamma^N_{in}$ and $\Gamma^N_{out}$ with
$\beta \cdot n$ nonnegative or negative, respectively.
We pose the boundary conditions
\begin{equation}
  \begin{split}
    u & = g, \ \ \Gamma^D, \\
    n \cdot \left( \beta u - \kappa \nabla u \right) & = \gamma, \ \ \Gamma^N_{in}, \\
    -n \cdot \left( \kappa \nabla u \right) & = \gamma, \ \ \Gamma^N_{out}.
  \end{split}
\end{equation}
A Galerkin method follows from
\begin{equation}
  a(u, v) = F(v),
\end{equation}
where
\begin{equation}
  \label{eq:cgcda}
  a(u, v) = \int_\Omega \kappa \nabla u \cdot \nabla v  + \left( \beta \cdot \nabla u\right) v \, dx
\end{equation}
\begin{equation}
  F(v) = \int_\Omega f v \, dx - \int_{\Gamma^N} \gamma v \, ds
\end{equation}
However, when $\beta$ is large relative to $\kappa$, the method loses stability unless a very fine mesh is used.
Among the many choices available to resolve this difficulty, we take the classical SUPG method~\cite{brooks1982streamline}.
This method gives rise to the variational problem
\begin{equation}
  \label{eq:supg}
  a_{SUPG}(u, v) = F_{SUPG}(v),
\end{equation}
where
\begin{equation}
  a_{SUPG} = a(u, v) + \int_\Omega \delta \left( \beta \cdot \nabla u - \nabla \cdot \kappa \nabla u \right) \left( \beta \cdot \nabla v \right) \, dx
\end{equation}
and
\begin{equation}
  F_{SUPG}(v) = F(v) + \int_\Omega \delta f \left( \beta \cdot \nabla v \right)\, dx.
\end{equation}
Here, $\delta$ is some parameter controlling the size of stabilizing term, and we take
\begin{equation}
  \delta = \tfrac{h}{2\|\beta\|}.
\end{equation}

Finally, we also consider time-dependent convection diffusion,
\begin{equation}
  \label{eq:tdcd}
  u_t + \beta \cdot \nabla u - \nabla \cdot \left( \kappa \nabla u \right),
\end{equation}
with boundary conditions as considered above in the steady case.
It turns out that implicit time stepping plus variational inequalities give good stability results, so consider standard Galerkin rather than SUPG discretizations, taking the evolution equation
\begin{equation}
  \label{eq:tdcdgal}
  \left( u_t , v \right) + a(u, v) = F(t; v)
\end{equation}
as our model, $a$ as given in~\eqref{eq:cgcda}.

\section{Best approximation result}
\label{sec:approx}
Here, we provide a best approximation result for $u - u_h$.
Such a result could follow as a special case of Falk's treatment of variational inequalities~\cite{falk1974error}.
In our setting, however, $u$ satisfies a variational equation rather than an inequality.
Consequently, we do not require the introduction of a pivot space and associated extra regularity assumptions in Falk's treatment.
The result is also further simplified by requiring $U_h \subset U$.
\begin{theorem}
  \label{thm:ba}
  Let $V$ be a Hilbert space with closed, convex subset $U$.  Let
  bilinear form $a$ satisfy~\eqref{eq:cont} and~\eqref{eq:coercive} and $F \in V^\prime$.
  Let $u \in V$ satisfy the variational problem~\eqref{eq:vareq} as well as $u \in U$.  
  For any finite-dimensional subspace $V_h \subseteq V$ with closed convex subset $U_h \subseteq V_h \cap U$, the solution $u_h \in U_h$ to~\eqref{eq:varineq} satisfies the best approximation result
  \begin{equation}
    \left\| u - u_h \right\| \leq \tfrac{C}{\alpha} \inf_{v_h \in U_h} \left\| u - v_h \right\|.
  \end{equation}
\end{theorem}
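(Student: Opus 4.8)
The plan is to mimic Céa's lemma, but using the variational inequality~\eqref{eq:varineq} in place of Galerkin orthogonality. First I would start from coercivity: for any $v_h \in U_h$, write
\begin{equation}
  \alpha \left\| u - u_h \right\|^2 \leq a(u - u_h, u - u_h) = a(u - u_h, u - v_h) + a(u - u_h, v_h - u_h).
\end{equation}
The first term is harmless — continuity~\eqref{eq:cont} bounds it by $C \left\| u - u_h \right\| \left\| u - v_h \right\|$. The crux is to show the second term $a(u - u_h, v_h - u_h)$ is nonpositive for every $v_h \in U_h$.

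To control that term, I would split it as $a(u, v_h - u_h) - a(u_h, v_h - u_h)$. Since $U_h \subseteq U \subseteq V$, the test function $v_h - u_h$ lies in $V$, so the variational equation~\eqref{eq:vareq} gives $a(u, v_h - u_h) = F(v_h - u_h)$. On the other hand, the discrete variational inequality~\eqref{eq:varineq} says precisely $a(u_h, v_h - u_h) \geq F(v_h - u_h)$. Subtracting, $a(u - u_h, v_h - u_h) = F(v_h - u_h) - a(u_h, v_h - u_h) \leq 0$. This is exactly where the hypothesis $U_h \subseteq U$ and the fact that $u$ solves an \emph{equation} (not merely an inequality) pay off: we may use $v_h - u_h$ as a legitimate test function in~\eqref{eq:vareq} without any extra regularity or pivot-space machinery.

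Combining the two bounds yields $\alpha \left\| u - u_h \right\|^2 \leq C \left\| u - u_h \right\| \left\| u - v_h \right\|$, and dividing through by $\alpha \left\| u - u_h \right\|$ (the case $u = u_h$ being trivial) gives $\left\| u - u_h \right\| \leq \tfrac{C}{\alpha} \left\| u - v_h \right\|$. Since $v_h \in U_h$ was arbitrary, taking the infimum over $U_h$ finishes the proof. I expect the only genuinely delicate point is the sign argument for $a(u - u_h, v_h - u_h)$; everything else is a routine adaptation of Céa's lemma, and the absence of a pivot space is what makes this cleaner than Falk's original setting.
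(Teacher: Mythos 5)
Your proof is correct and follows essentially the same route as the paper's: both hinge on using the discrete inequality~\eqref{eq:varineq} together with the fact that $u$ solves the \emph{equation}~\eqref{eq:vareq} to reduce coercivity to $\alpha\left\|u-u_h\right\|^2 \leq a(u-u_h, u-v_h)$, with your version merely isolating the nonpositive cross term $a(u-u_h, v_h-u_h)$ explicitly rather than expanding the full quadratic. One tiny quibble: testing~\eqref{eq:vareq} with $v_h - u_h$ needs only $V_h \subseteq V$, not $U_h \subseteq U$ (the latter is what guarantees $u_h$ actually satisfies the bounds, and is what lets one avoid the extra consistency term in Falk's general theorem).
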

\begin{proof}
  Using coercivity, we have
  \begin{equation*}
    \begin{split}
      \alpha \left\| u - u_h \right\|^2 & \leq a(u-u_h, u-u_h)  \\
      & = a(u, u) + a(u_h, u_h) - a(u, u_h) - a(u_h, u).
    \end{split}
  \end{equation*}
  Now,~\eqref{eq:varineq} implies that for any $v_h \in U_h$,
  \[
  a(u_h, u_h) \leq a(u_h, v_h) + F(u_h - v_h),
  \]
  so
  \begin{equation*}
    \alpha \left\| u - u_h \right\|^2
    \leq a(u, u) - a(u, u_h) - a(u_h, u) + a(u_h, v_h) + F(u_h - v_h).
  \end{equation*}
  Because $u$ satisfies the variational equality~\eqref{eq:vareq}, we can write
  \[
  F(u_h - v_h) = F(u_h) - F(v_h) = a(u, u_h) - a(u, v_h),
  \]
  and so
  \begin{equation*}
    \begin{split}
    \alpha \left\| u - u_h \right\|^2
    & \leq a(u, u) - a(u_h, u) + a(u_h, v_h) - a(u, v_h) \\
    & = a(u - u_h, u) - a(u - u_h, v_h) \\
    & = a(u - u_h, u - v_h),
    \end{split}
  \end{equation*}
  and the result follows from the continuity estimate~\eqref{eq:cont}.
\end{proof}

This result is an analog of C\'ea's Lemma~\cite{cea1964approximation} for variational problems.
Hence, we expect error estimates in specific settings to be determined by the regularity of the solution $u$ to~\eqref{eq:vareq} and the approximation power of of the solution set $U_h$.

Although a variational inequality produces the numerical approximation,
the regularity of $u$ will be determined by standard PDE theory.
Hence, the appearance of a variational inequality does not artificially restrict the approximability, and if $u$ is smooth enough the accuracy will only be limited by how well it is approximated in $U_h$.

At this point, our convergence theory covers only consistent and conforming discretizations with a coercive operator.
Variational inequalities have been formulated for other methods, such as discontinuous Galerkin and mixed methods, in~\cite{chang2017variational}, although we leave the analysis of more general settings to future work.

\section{Constrained approximation}
\label{sec:bounds}
\subsection{Approximation by bounds-constrained polynomials}
Best approximation by bounds-respecting polynomials is a challenging problem that has received relatively little attention in the literature.
An early result here is due to Beatson~\cite{beatson1982restricted}.
Working with maximal-continuity univariate splines with possibly nonuniform knot spacing, his main result is the existence of optimal-order nonnegative spline approximations to smooth nonnegative functions.
Estimates are given for all derivatives in $L^p$ norms for $1 \leq p \leq \infty$.

More recently, a general result for univariate polynomials on an interval was given by Despr{\'e}s~\cite{despres2017polynomials}.
He showed that for any polynomial approximation $q$ of some continuous $f:[0, 1]\rightarrow[0,1]$ there exists a polynomial $g$ of the same degree satisfying the bounds constraints with $L^\infty$ approximation error no more than twice that of the error in $g$. 

Here, we give a major generalization of his result, which essentially recognizes that the assumptions can be weakened and adapts his construction to functions with general bounded range rather than $[0, 1]$.  
\begin{theorem}
  \label{thm:boundapprox}
  Let $K \subset \mathbb{R}^n$ be a compact domain and $f: K \rightarrow \mathbb{R}$ be a continuous function with range $[m, M]$.
  Let $V_h$ be a vector space of continuous functions mapping $K$ into $\mathbb{R}$ such that constant-valued functions on $K$ are included.
  For any $g \in V_h$, there exists $q \in V_h$ such that the range of $q$ is contained in $[m, M]$ and
  \begin{equation}
    \| f - q \|_\infty \leq 2 \| f - g \|_\infty.
  \end{equation}
\end{theorem}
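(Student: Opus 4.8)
The plan is to take $q$ to be a convex combination of $g$ with the constant function equal to the midpoint $c = (m+M)/2$, choosing the mixing parameter just large enough to pull the (possibly out-of-range) values of $g$ back into $[m,M]$. Since $V_h$ is a vector space that contains the constants, any such combination lies in $V_h$, so the only work is to verify the range condition and the error bound simultaneously.

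First I would set $\epsilon = \|f-g\|_\infty$ and dispose of degenerate cases: if $\epsilon = 0$ then $g=f$ (both continuous on the compact $K$) already has range $[m,M]$, so $q=g$; and if $M=m$ then $f$ is constant and $q \equiv m$ works. In the main case, since $|f-g|\le\epsilon$ pointwise and $f$ has range $[m,M]$, the function $g$ has range contained in $[m-\epsilon,\,M+\epsilon]$. I would then define $q = \lambda g + (1-\lambda)c$ with $\lambda = \tfrac{M-m}{M-m+2\epsilon} \in (0,1)$, so that $1-\lambda = \tfrac{2\epsilon}{M-m+2\epsilon}$.

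The verification splits into two short computations. For the range: $q$ has range contained in $[\lambda(m-\epsilon)+(1-\lambda)c,\ \lambda(M+\epsilon)+(1-\lambda)c]$, and because $c$ is the midpoint the two endpoint inequalities $\lambda(m-\epsilon)+(1-\lambda)c \ge m$ and $\lambda(M+\epsilon)+(1-\lambda)c \le M$ reduce to the single condition $(1-\lambda)\tfrac{M-m}{2} \ge \lambda\epsilon$, which holds with equality for the chosen $\lambda$; hence the range of $q$ lies in $[m,M]$. For the error: write $q - f = \lambda(g-f) + (1-\lambda)(c-f)$ and use $|g-f|\le\epsilon$ together with $|c-f|\le \tfrac{M-m}{2}$ (valid since $f\in[m,M]$ and $c$ is the midpoint), giving $\|q-f\|_\infty \le \lambda\epsilon + (1-\lambda)\tfrac{M-m}{2} = \tfrac{2(M-m)\epsilon}{M-m+2\epsilon} \le 2\epsilon$, as required.

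The one point requiring care is the choice of $\lambda$: it must be small enough to force the range of $q$ into $[m,M]$, yet the resulting perturbation of $g$ must still leave the error under $2\epsilon$; taking $c$ to be the midpoint of $[m,M]$ makes the two one-sided range constraints symmetric and keeps the bookkeeping transparent, and the algebra then shows the same $\lambda$ that achieves the range condition also yields exactly the factor $2$. No smoothness of $f$ or $g$ enters — continuity and compactness of $K$ are used only to guarantee that the sup norms and ranges are finite and attained — which is precisely why the hypotheses can be weakened relative to Despr\'es's original statement.
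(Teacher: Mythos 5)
Your proposal is correct and is essentially the paper's own proof: the paper defines $q = \overline{m} + \tfrac{M-m}{M-m+2\|f-g\|_\infty}(g-\overline{m})$ with $\overline{m}=(m+M)/2$, which is exactly your convex combination $\lambda g + (1-\lambda)c$ with the same $\lambda$, and both the range verification and the error decomposition $f-q = (1-\lambda)(f-\overline{m}) + \lambda(f-g)$ match the paper's computations. The only cosmetic difference is that you separate out the degenerate cases $\|f-g\|_\infty=0$ and $M=m$, which the paper's single formula absorbs implicitly.
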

\begin{proof}
  We define $\overline{m} = \tfrac{m+M}{2}$.
  Let $g \in V_h$ be given and put
  \begin{equation}
    \label{eq:defofq}
    q = \overline{m} + \tfrac{M-m}{M-m+2\|f-g\|_\infty} \left( g - \overline{m} \right).
  \end{equation}
  Since $q$ is constructed by vector space operations and $V_h$ contains constants, we have $q \in V_h$.

  We note
  \[
  \| f - \overline{m} \|_{\infty} \leq \tfrac{M-m}{2}.
  \]
  Then, we also observe
  \[
  \| g - \overline{m} \|_\infty \leq \| f - \overline{m} \|_\infty + \| f - g \|_\infty \leq \tfrac{M-m}{2} + \| f - g \|_\infty.
  \]
  This gives that
  \[
  \begin{split}
  \| q - \overline{m} \|_\infty & =
  \tfrac{M-m}{M-m+2\|f-g\|_\infty} \| g - \overline{m} \|_\infty \\
  & \leq
  \tfrac{M-m}{M-m+2\|f-g\|_\infty} \left( \tfrac{M-m}{2} + \| f - g \|_\infty \right) \\
  & = \tfrac{M-m}{M-m+2\|f-g\|_\infty} \left( \tfrac{M-m + 2 \| f - g \|_\infty}{2} \right) \\
  & = \tfrac{M-m}{2},
  \end{split}
  \]
  and so the range of $q$ is contained in $[m, M]$.

  Now, the difference between $f$ and $q$ is given by
  \begin{equation}
    \begin{split}
      f - q & = f - \overline{m} - \tfrac{M-m}{M-m+2\|f-g\|_\infty} \left( g - \overline{m} \right) \\
       & = \left( 1 - \tfrac{M-m}{M-m+2\|f-g\|_\infty} \right) \left( f - \overline{m} \right) + \tfrac{M-m}{M-m+2\|f-g\|_\infty} \left( f - g \right),
    \end{split}
  \end{equation}
  and so
  \begin{equation}
    \label{eq:begindiff}
    \begin{split}
      f - q & = \left( 1 - \tfrac{M-m}{M-m+2\|f-g\|_\infty} \right) \left( f - \overline{m} \right) + \tfrac{M-m}{M-m+2\|f-g\|_\infty} \left( f - g \right) \\
       & = \left( \tfrac{2 \| f - g \|_\infty}{M-m+2\|f-g\|_\infty} \right)\left( f - \overline{m} \right) + \tfrac{M-m}{M-m+2\|f-g\|_\infty} \left( f - g \right).
    \end{split}
  \end{equation}
  Taking norms of both sides, we have
  \begin{equation}
    \begin{split}
      \| f - q\|_\infty & \leq \left( \tfrac{2 \| f - g \|_\infty}{M-m+2\|f-g\|_\infty} \right) \tfrac{M-m}{2} + \tfrac{M-m}{M-m+2\|f-g\|_\infty} \| f - g \|_\infty \\
      & = \tfrac{2 (M-m) \| f -g \|_\infty}{M-m + 2 \| f - g \|_\infty} 
      = \tfrac{2 \| f - g \|_\infty}{1+\tfrac{2}{M-m} \|f-g \|_\infty} \\
      & \leq 2 \| f - g \|_\infty.
    \end{split}
  \end{equation}
\end{proof}

This theorem applies not only to spaces of polynomials, but also piecewise polynomials.
However, if the function space $V_h$ is assumed to satisfy boundary conditions, then the construction of $q$ given in~\eqref{eq:defofq} need not lie in $V_h$.
Hence, our result does not directly apply to Theorem~\ref{thm:ba} in such cases, although it does if all boundary conditions are included via the variational form rather than strongly through the definition of function spaces.
Our result can also be applied piecewise to the case of approximation by piecewise polynomials lacking inter-element continuity.
It also works on nonpolynomial approximations such as exponential or trigonometric functions provided that constants are included in the approximating space.

The construction in Theorem~\ref{thm:boundapprox} can easily be integrated over $K$ to give estimates in $L^p$ norms.
\begin{corollary}
  \label{lp}
  Let $1 \leq p < \infty$.  
  Under the assumptions of Theorem~\ref{thm:boundapprox}, there exists $C > 0$ depending on $K$ and $p$ but not $f$ such that for any $g \in V_h$ there exists $q \in V_h$ with range contained in $[m, M]$ and
  \begin{equation}
    \| f - q \|_p \leq C \| f - g \|_p.
  \end{equation}
\end{corollary}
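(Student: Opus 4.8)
The plan is to reuse verbatim the function $q$ constructed in the proof of Theorem~\ref{thm:boundapprox}, namely
\[
q = \overline{m} + \tfrac{M-m}{M-m+2\|f-g\|_\infty}\left(g - \overline{m}\right),
\]
and to bound its $L^p$ error by a multiple of $\|f-g\|_p$ rather than $\|f-g\|_\infty$. First I would observe that the range of $q$ is contained in $[m,M]$ exactly as before, since that argument used only $\|f-\overline m\|_\infty \le \tfrac{M-m}{2}$, which still holds. So the only thing requiring a new estimate is the approximation bound.

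The key step is to return to the identity~\eqref{eq:begindiff}, which expresses $f-q$ as a convex-type combination:
\[
f-q = \left(\tfrac{2\|f-g\|_\infty}{M-m+2\|f-g\|_\infty}\right)\left(f-\overline{m}\right) + \tfrac{M-m}{M-m+2\|f-g\|_\infty}\left(f-g\right).
\]
Rather than taking the sup norm of both sides, I would take the $L^p$ norm and apply the triangle inequality in $L^p(K)$. The second term contributes $\tfrac{M-m}{M-m+2\|f-g\|_\infty}\|f-g\|_p \le \|f-g\|_p$. For the first term, I use the pointwise bound $|f-\overline m| \le \tfrac{M-m}{2}$ to get $\|f-\overline m\|_p \le \tfrac{M-m}{2}\,|K|^{1/p}$, so that term is bounded by $\tfrac{2\|f-g\|_\infty}{M-m+2\|f-g\|_\infty}\cdot\tfrac{M-m}{2}\,|K|^{1/p} \le |K|^{1/p}\,\|f-g\|_\infty$, where I have used that the scalar prefactor is at most $\tfrac{1}{M-m}$ times $2\|f-g\|_\infty$ and thus the product is at most $\|f-g\|_\infty$.

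The main obstacle is that this last bound is in terms of $\|f-g\|_\infty$, not $\|f-g\|_p$, so as written it does not give a pure $L^p$ estimate. The cleanest fix, which I would adopt, is to restrict attention to the finite-dimensional space $V_h$: on $V_h$ all norms are equivalent, so there is a constant $C_h$ with $\|f-g\|_\infty \le$ (something controlled by $\|f-g\|_p$) — but since $f\notin V_h$ this needs care. A more robust route avoiding norm equivalence on $f-g$ is to note that if $\|f-g\|_\infty$ is not already comparable to $\|f-g\|_p$, one simply picks a better competitor; concretely, it suffices to prove the statement for the particular $g$ of least $L^\infty$ error among those with $\|f-g\|_p$ bounded, or—simplest of all—to replace the hypothesis "for any $g$" by first passing to $\tilde g \in V_h$ with $\|f-\tilde g\|_\infty \le C\|f-g\|_p$ guaranteed by finite dimensionality once $g$ is, say, a quasi-interpolant. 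I expect the intended argument is the short one: since the displayed convex combination has coefficients summing to $1$ with the $f-\overline m$ coefficient being $O(\|f-g\|_\infty/(M-m))$, and since on the finite-dimensional $V_h$ we have $\|f-g\|_\infty\le C_h\|f-g\|_p$ whenever $g$ ranges over a set where this interpolation inequality is uniform, the result follows with $C = 1 + C_h|K|^{1/p}$. I would write the proof by taking $L^p$ norms in~\eqref{eq:begindiff}, invoking $|f-\overline m|\le\tfrac{M-m}{2}$ pointwise for the first term and the triangle inequality for the second, and then absorbing the resulting $\|f-g\|_\infty$ into $C\|f-g\|_p$ via equivalence of norms on the finite-dimensional space, noting $C$ depends on $K$, $p$, and $V_h$ but not on $f$.
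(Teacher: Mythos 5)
Your first half is exactly the route the paper intends: the corollary is presented as an immediate consequence of ``integrating the construction of Theorem~\ref{thm:boundapprox} over $K$,'' so reusing the $q$ of \eqref{eq:defofq}, observing that the range argument is unchanged, and taking $L^p$ norms in \eqref{eq:begindiff} with the pointwise bound $|f-\overline{m}|\le\tfrac{M-m}{2}$ is the right setup. Carried out, it gives
\begin{equation*}
\|f-q\|_p \;\le\; \tfrac{2\|f-g\|_\infty}{M-m+2\|f-g\|_\infty}\,\|f-\overline{m}\|_p \;+\; \tfrac{M-m}{M-m+2\|f-g\|_\infty}\,\|f-g\|_p \;\le\; |K|^{1/p}\,\|f-g\|_\infty + \|f-g\|_p,
\end{equation*}
and you have correctly located the difficulty: the first term is controlled by $\|f-g\|_\infty$, not $\|f-g\|_p$.

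The fix you propose does not close that gap. Equivalence of norms on a finite-dimensional $V_h$ controls $\|v\|_\infty$ by $\|v\|_p$ only for $v\in V_h$, and $f-g\notin V_h$; no inequality $\|f-g\|_\infty\le C\|f-g\|_p$ valid for all continuous $f$ and all $g\in V_h$ can hold with $C$ independent of $f$ (an $f$ with a narrow spike against a fixed $g$ makes the ratio arbitrarily large), so any constant obtained this way depends on $f$, e.g.\ through $\operatorname{dist}_p(f,V_h)$, which contradicts the corollary's stipulation that $C$ depend only on $K$ and $p$. Your alternative suggestions (switching to a better competitor, quasi-interpolation) are only sketched and would face the same problem, since the claim must hold for \emph{every} $g\in V_h$. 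You should also be aware that the paper itself offers no proof beyond the one-sentence remark, and the integration you (correctly) set up most directly yields $\|f-q\|_p\le 2|K|^{1/p}\|f-g\|_\infty$, an $L^p$-versus-$L^\infty$ estimate rather than the stated $L^p$-versus-$L^p$ bound; as written, your proposal identifies this obstacle honestly but does not overcome it, and an additional idea (or a weakened statement with $\|f-g\|_\infty$ on the right) is needed to make the argument complete.
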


Theorem~\ref{thm:boundapprox} is abstract, but we primarily envision its application to piecewise polynomial finite element methods.
This context gives us a clean pathway to estimates of derivatives via inverse estimates~\cite{brenner2008mathematical}.
To this end, we suppose that $V_h$ is constructed by continuous (or smoother) piecewise polynomials over a triangulation of $K$ and that functions in the space satisfy an inverse estimate.
That is, we suppose that there exists some $C_I$ such that
\begin{equation}
  \label{eq:inverse}
  \| \nabla q \|_p \leq \tfrac{C_I}{h} \| q \|_p,
\end{equation}
where $h$ is the maximum diameter over cells in the triangulation.  This
is consistent with the inverse estimates proven for standard finite elements in~\cite{brenner2008mathematical}.

\begin{theorem}
  \label{thm:derivbound}
  Under the assumptions of Theorem~\ref{thm:boundapprox} and~\eqref{eq:inverse}, there exists $C>0$ such that for any $g \in V_h$, there exists some $q \in V_h$ with range contained in $[m, M]$ such that
  \begin{equation}
  \| \nabla( f - q ) \|_p \leq \| \nabla ( f - g ) \|_p + \tfrac{C}{h} \| f - g \|_p.
  \end{equation}
\end{theorem}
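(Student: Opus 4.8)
The plan is to apply the inverse estimate~\eqref{eq:inverse} to the difference $g - q$, where $q$ is chosen to be exactly the bounds-constrained approximant supplied by Corollary~\ref{lp} (or, when $p = \infty$, by Theorem~\ref{thm:boundapprox} itself). Since that $q$ lies in $V_h$ together with $g$, their difference is a piecewise polynomial on the triangulation, so the inverse estimate converts an $L^p$ bound on $g - q$ into an $L^p$ bound on $\nabla(g - q)$ at the usual cost of a factor $h^{-1}$. The resulting constant will have the form $C = C_I(1 + C_0)$, with $C_I$ from~\eqref{eq:inverse} and $C_0$ the constant of Corollary~\ref{lp}; in particular it is independent of $f$, $g$, and $h$.

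In more detail, I would proceed as follows. We may assume $\nabla(f - g) \in L^p(K)$, for otherwise the right-hand side is infinite and there is nothing to prove. Let $q \in V_h$ be the function produced by Corollary~\ref{lp} --- concretely, the rescaled function~\eqref{eq:defofq} --- so that the range of $q$ is contained in $[m, M]$ and $\|f - q\|_p \le C_0 \|f - g\|_p$. Split the gradient error by the triangle inequality,
\begin{equation*}
  \|\nabla(f - q)\|_p \le \|\nabla(f - g)\|_p + \|\nabla(g - q)\|_p .
\end{equation*}
Because $V_h$ is a vector space, $g - q \in V_h$, so the inverse estimate~\eqref{eq:inverse} gives $\|\nabla(g - q)\|_p \le \tfrac{C_I}{h}\|g - q\|_p$. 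Finally, estimate $\|g - q\|_p$ in $L^p$ alone, first by the triangle inequality and then by Corollary~\ref{lp},
\begin{equation*}
  \|g - q\|_p \le \|f - g\|_p + \|f - q\|_p \le (1 + C_0)\,\|f - g\|_p ,
\end{equation*}
and combine the three displays to obtain the claim with $C = C_I(1 + C_0)$.

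I do not expect a genuine obstacle, since Corollary~\ref{lp} already carries the analytic content; the one point to get right is \emph{which} quantity the inverse estimate is applied to. One must compare $q$ against $g$ rather than try to bound $\nabla(f - q)$ directly, so that the object hit by~\eqref{eq:inverse} is a piecewise polynomial, and one must invoke the $L^p$ comparison of Corollary~\ref{lp} for the \emph{same} $q$: the explicit rescaling~\eqref{eq:defofq} on its own, combined with the inverse estimate, would only deliver a bound involving $\|f - g\|_\infty$ rather than $\|f - g\|_p$. The factor $h^{-1}$ is unavoidable and is exactly the price paid by inverse-estimate arguments for trading one degree of differentiability in exchange for enforcing the bounds constraint, consistent with the role this estimate plays alongside Theorem~\ref{thm:ba}.
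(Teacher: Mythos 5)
Your proposal is correct and matches the paper's proof essentially line for line: the same triangle inequality split, the inverse estimate applied to $g-q\in V_h$, a second triangle inequality on $\|g-q\|_p$, and Corollary~\ref{lp} (or Theorem~\ref{thm:boundapprox} for $p=\infty$) to close the argument. Nothing further is needed.
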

\begin{proof}
  The inverse estimate~\eqref{eq:inverse} between applications of the triangle inequality gives
  \[
  \begin{split}
    \| \nabla( f - q ) \|_p &
    \leq \| \nabla( f - g ) \|_p + \| \nabla( g - q ) \|_p \\
    & \leq \| \nabla( f - g ) \|_p + \tfrac{C_I}{h} \| g - q \|_p \\
    & \leq \| \nabla( f - g ) \|_p + \tfrac{C_I}{h} \left( \| f - g \|_p + \| f - q \|_p \right),
  \end{split}
  \]
  and Corollary~\ref{lp} or Theorem~\ref{thm:boundapprox} finishes the estimate.
\end{proof}
Together, these results imply $W^{1,p}$ estimates, and higher-order derivatives may be estimated by similar techniques provided sufficient smoothness of $f$ and the approximating space.
As with our discussion of Theorem~\ref{thm:boundapprox}, this result may also be applied piecewise for discontinuous approximating spaces.

\subsection{Representing bounds-constrained polynomials}
This theoretical discussion shows that enforcing bounds constraints on the numerical solution of PDEs via variational inequalities can produce errors comparable to the unconstrained solution and hence need not degrade the overall accuracy of the approximation.
While the numerical results in~\cite{chang2017variational} and works based on it use lowest-order (piecewise linear) basis approximations, these do not employ higher-order approximations.

While continuous piecewise linear polynomials satisfy bounds constraints iff their nodal values do, encoding the bounds constraints for higher-order and/or multivariate polynomials is quite challenging.
In one variable, Nesterov~\cite{nesterov2000squared} has classified nonnegative polynomials over an interval in terms of convex cones of their coefficients, with the coefficients lying in this cone iff the polynomial admits a certain sum-of-squares representation.
However, the general problem of determining nonnegativity of a multivariate polynomial is actually NP-hard~\cite{lasserre2007sum}.

Based on our prior work in~\cite{allen2022bounds}, we consider sufficient conditions for bounds constraints based on the \emph{Bernstein} basis~\cite{bernstein1912demo, LaiSch07}.
In one variable on $[0, 1]$, these polynomials of degree $n$  take the form
\begin{equation}
  b^n_i(x) = {n \choose i} x^i (1-x)^{n-i}, \ \ \ 0 \leq i \leq n.
\end{equation}
These polynomials give a nonnegative partition of unity forming a basis for polynomials of degree $n$.  They are readily mapped to any compact interval $[a, b]$, and given their geometric decomposition~\cite{arnold2009geometric}, they can be easily assembled across cells to form $C^0$ piecewise polynomials.  The cubic basis is given in Figure~\ref{fig:bern3}.

\begin{figure}
  \begin{center}
    \begin{tikzpicture}[scale=3.0]
      \draw[->] (0, 0) -- (1.1, 0) node[right] {$x$};
      \draw[->] (0, 0) -- (0, 1.1) node[above] {$y$};
      \draw [smooth, samples=100, domain=0:1] plot({\x}, {(1-\x)^3});
      \draw [smooth, samples=100, domain=0:1] plot({\x}, {3*\x * (1-\x)^2});
      \draw [smooth, samples=100, domain=0:1] plot({\x}, {3*(\x)^2 * (1-\x)});
      \draw [smooth, samples=100, domain=0:1] plot({\x}, {\x^3});
  \end{tikzpicture}
  \end{center}
  \caption{Cubic Bernstein polynomials.}
  \label{fig:bern3}
\end{figure}
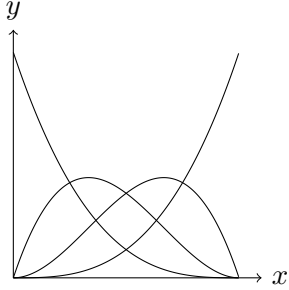

The Bernstein polynomials all take values between 0 and 1, so if the coefficients of some
\[
p(x) = \sum_{i=0}^n p_i B^n_i(x)
\]
take values in the range $[c, d]$, then so must $p(x)$.
More precisely, that $p$ must lie in the convex hull of its control net.

In particular, nonnegative Bernstein coefficients imply nonnegativity of a polynomial, but the converse does not hold.  For example, we can take
\begin{equation}
  \label{eq:oops}
  p(x) = B^2_0(x) - 0.9 B^2_1(x) + B^2_2(x) = 3.8x^2 - 3.8 x + 1,
\end{equation}
which has a negative coefficient but minimum value of 0.05 at $x=0.5$.
It is possible to precisely give (nonlinear!) conditions on the Bernstein coefficients for which a quadratic polynomial is nonnegative~\cite{LaiSch07}.
Necessary and sufficient conditions for general order are given by a theorem of Bernstein that a polynomial $p$ of degree $n$ is positive iff there exists a degree $m$ such that its representation in the Bernstein basis of degree $n+m$ has positive coefficients.
However, this degree $m$ is not known \emph{a priori} and can be large in pathological cases.

The Bernstein basis extends readily to simplices in higher dimensions via barycentric coordinates.
Let $\{ \mathbf{v}_i \}_{i=0}^d$ be the vertices of some nondegenerate simplex $T$ in $\mathbb{R}^d$.
Associated with these, we define the barycentric coordinates $\{ b_i \}_{i=0}^d$.
Each $b_i$ is an affine map from $\mathbb{R}^d$ to $\mathbb{R}$ satisfying
\(
b_i(\mathbf{v}_j) = \delta_{ij}
\).
The barycentric coordinates are nonnegative over $T$ and sum to 1 at all points in $\mathbb{R}^d$.
Using multiindex notation, we let $\boldsymbol{\alpha} = \left( \alpha_0, \dots, \alpha_d \right)$ be a $(d+1)$-tuple of nonnegative integers.
Its order, denoted by $|\boldsymbol{\alpha}|$, is given by
\[
|\boldsymbol{\alpha}| = \sum_{i=0}^d \alpha_i.
\]
We define its factorial as the product of the factorials of its components:
\[
\boldsymbol{\alpha}! = \prod_{i=0}^d \alpha_i!.
\]
For degree $n$ and any $|\boldsymbol{\alpha}| = n$, we define
\begin{equation}
  B^n_{\boldsymbol{\alpha}} = \frac{n!}{\boldsymbol{\alpha}!} \prod_{i=0}^d b_i^{\alpha_i},
\end{equation}
and then the set $\left\{ B^n_{\boldsymbol{\alpha}} \right\}_{i=0}^d$ forms a basis for polynomials of total degree $n$ over $T$.

The multivariate simplicial Bernstein basis retains many favorable properties from the univariate case -- it is nonnegative over $T$, geometrically decomposed so readily assembled in a $C^0$ fashion, and forms a partition of unity.
Moreover, it is also nonnegative over $T$ so that our geometric discussion holds.

Although the multivariate basis possesses structure leading to fast finite element algorithms~\cite{ainsworth2011bernstein, kirby2011fast, kirby2012fast}, our main interest with the Bernstein basis is geometric.
Suppose we wish to approximate a PDE~\eqref{eq:vareq} using the
variational inequality~\eqref{eq:varineq} where the admissible set $U$ consists of nonnegative functions.
As with a standard finite element method, we take $V$ to comprise $H^1$ functions suitably constrained on the boundary, and then we take the standard approximating space with polynomials of degree $k$:
\begin{equation}
  V_h = \left\{ u \in V: u|_{T} \in P^k(T), \ \ T \in \mathcal{T}_h \right\}.
\end{equation}
Now, owing to the difficulties with representing nonnegative polynomials, we cannot simply take $U_h = V_h \cap U$, but we can take $U_h \subset V_h \cap U$ to consist of all members of $V_h$ whose Bernstein coefficients are nonnegative over each cell $T$ in the mesh.

We will see in the sequel that such approximations can indeed increase the accuracy relative to continuous piecewise polynomials, but the question of approximation power of our choice of $U_h$ is an open and rather delicate question.
Indeed, the example~\eqref{eq:oops} shows that $U_h$ can cannot reproduce all positive quadratic functions, so we cannot appeal to standard tools such as the Bramble-Hilbert Lemma~\cite{bramble1971bounds}.
On the other hand, subdividing the interval $[0, 1]$ into two intervals immediately removes the problem, as the Bernstein coefficients on each half-interval would all be positive.

\section{Numerical results}
\label{sec:results}
To illustrate the theory above, we consider a suite of numerical experiments, solving both standard finite element methods and linear variational inequalities for equations~\eqref{eq:ellipticPDE} and~\eqref{eq:convdiff}.
All of our numerical experiments are carried out using Firedrake~\cite{FiredrakeUserManual, Rathgeber:2016}, a high-level Python library that automates the solution of the finite element method.
Firedrake provides convenient high-level access to PETSc~\cite{petsc-user-ref, petsc-efficient} through petsc4py~\cite{Dalcin2011}.
All linear variational problems were all solved with the default PETSc sparse direct LU factorization, and the variational inequalities were solved with PETSc's reduced space active set solver for variational inequalities, \lstinline{vinewtonrsls}, with an absolute stopping tolerance of $10^{-8}$.
For the diffusion equation~\eqref{eq:ellipticPDE}, we meshed the unit square using Firedrake's built-in function, but our meshes for~\eqref{eq:convdiff} were generated using Firedrake's run-time interface to the NETGEN~\cite{schoberl1997netgen} Python bindings.

\subsection{Diffusion}
For purely diffusive problems of the form~\eqref{eq:ellipticPDE}, we perform two kinds of experiments.
First, via the method of manufactured solutions, we empirically assess the accuracy of our method for enforcing bounds constraints.
To this end, we select $\Omega$ to be the unit square, subdividing it into an $N \times N$ mesh of squares, each subdivided into right triangles.
We choose the diffusion coefficient $\kappa$ as in~\cite{chang2017variational}
\begin{equation}
  \label{eq:kappa}
  \kappa = \begin{bmatrix}
    y^2 + \epsilon x^2 & -(1-\epsilon) x y \\
    - (1-\epsilon) x y & x^2 + \epsilon y^2 \end{bmatrix},
\end{equation}
where we pick $\epsilon = 10^{-4}$.
Hence, $\kappa$ is symmetric and positive definite away from the boundary but anisotropic and heterogeneous. 

With homogeneous Dirichlet boundary conditions, we select the forcing function $f$ such that the exact solution of~\eqref{eq:ellipticPDE} is
\[
u(x, y) = e^{2xy} \left( \sin^2 \pi x \right)\left( \sin^2 2\pi y \right).
\]
We solved this problem on an $N \times N$ mesh with $N=2^i$ for $2 \leq i \leq 6$ and polynomials of degree $1 \leq k \leq 3$ using both a linear variational problem and a discrete variational inequality to enforce bounds constraints.

In Figure~\ref{fig:mmsdiff}, we compare the accuracy obtained by our methods in the $L^2$ (Figure~\ref{mmsl2}) and $H^1$ (Figure~\ref{mmsh1}) norms.
The solution of the variational inequality with piecewise linear approximations actually gave slightly smaller $L^2$ and $H^1$ errors than the solution to the discrete variational problem.
This is perhaps surprising, but does not contradict the convergence theory.
The solution of the (symmetric!) variational problem is only known to be optimal in the energy norm:
\[
\| u \|^2_a = \int_\Omega \kappa \nabla u \cdot \nabla u \, dx.
\]
We in fact found the error of the discrete variational inequality slightly larger than that of the variational problem in this norm, although Figure~\ref{mmsen} shows the results are in fact very similar.

\begin{figure}
  \centering
  \begin{subfigure}[t]{0.3\textwidth}
    \begin{tikzpicture}[scale=0.6]
      \begin{axis}[
          legend cell align=left,
          legend pos=south west,
          xlabel={$N$},
          ylabel={$L^2$ error},
          ymode=log, log basis y={10},
          xmode=log, log basis x={2}
        ]
        \addplot table[x=N, y=ul2, col sep= comma] {diffmms_deg1.csv};
        \addlegendentry{$k=1$, VP}
        \addplot table[x=N, y=cl2, col sep= comma] {diffmms_deg1.csv};
        \addlegendentry{$k=1$, VI}
        \addplot table[x=N, y=ul2, col sep= comma] {diffmms_deg2.csv};
        \addlegendentry{$k=2$, VP}
        \addplot table[x=N, y=cl2, col sep= comma] {diffmms_deg2.csv};
        \addlegendentry{$k=2$, VI}
        \addplot table[x=N, y=ul2, col sep= comma] {diffmms_deg3.csv};
        \addlegendentry{$k=3$, VP}
        \addplot table[x=N, y=cl2, col sep= comma] {diffmms_deg3.csv};
        \addlegendentry{$k=3$, VI}      
      \end{axis}
    \end{tikzpicture}
    \caption{$L^2$ error}
    \label{mmsl2}
  \end{subfigure}
  \begin{subfigure}[t]{0.3\textwidth}
    \begin{tikzpicture}[scale=0.6]
      \begin{axis}[
          legend cell align=left,
          legend pos=south west,
          xlabel={$N$},
          ylabel={$H^1$ error},
          ymode=log, log basis y={10},
          xmode=log, log basis x={2}
        ]
        \addplot table[x=N, y=uh1, col sep= comma] {diffmms_deg1.csv};
        \addlegendentry{$k=1$, VP}
        \addplot table[x=N, y=ch1, col sep= comma] {diffmms_deg1.csv};
        \addlegendentry{$k=1$, VI}
        \addplot table[x=N, y=uh1, col sep= comma] {diffmms_deg2.csv};
        \addlegendentry{$k=2$, VP}
        \addplot table[x=N, y=ch1, col sep= comma] {diffmms_deg2.csv};
        \addlegendentry{$k=2$, VI}
        \addplot table[x=N, y=uh1, col sep= comma] {diffmms_deg3.csv};
        \addlegendentry{$k=3$, VP}
        \addplot table[x=N, y=ch1, col sep= comma] {diffmms_deg3.csv};
        \addlegendentry{$k=3$, VI}      
      \end{axis}
    \end{tikzpicture}
    \caption{$H^1$ error}    
    \label{mmsh1}    
  \end{subfigure}
  \begin{subfigure}[t]{0.3\textwidth}
    \begin{tikzpicture}[scale=0.55]
      \begin{axis}[
          legend cell align=left,
          legend pos=south west,
          xlabel={$N$},
          ylabel={Energy error},
          ymode=log, log basis y={10},
          xmode=log, log basis x={2}
        ]
        \addplot table[x=N, y=uen, col sep= comma] {diffmms_deg1.csv};
        \addlegendentry{$k=1$, VP}
        \addplot table[x=N, y=cen, col sep= comma] {diffmms_deg1.csv};
        \addlegendentry{$k=1$, VI}
        \addplot table[x=N, y=uen, col sep= comma] {diffmms_deg2.csv};
        \addlegendentry{$k=2$, VP}
        \addplot table[x=N, y=cen, col sep= comma] {diffmms_deg2.csv};
        \addlegendentry{$k=2$, VI}
        \addplot table[x=N, y=uen, col sep= comma] {diffmms_deg3.csv};
        \addlegendentry{$k=3$, VP}
        \addplot table[x=N, y=cen, col sep= comma] {diffmms_deg3.csv};
        \addlegendentry{$k=3$, VI}      
      \end{axis}
    \end{tikzpicture}
  \caption{Energy error}
    \label{mmsen}
  \end{subfigure}
  \caption{Error approximating~\eqref{eq:ellipticPDE} versus $N$ on an $N \times N$ mesh for various polynomial degrees.  We compare the finite element solution obtained by solving a linear variational problem to the solution of a discrete variational inequality enforcing bounds on the Bernstein coefficients.  In each case, we observe similar convergence rates for the solution by each technique.}
  \label{fig:mmsdiff}
\end{figure}
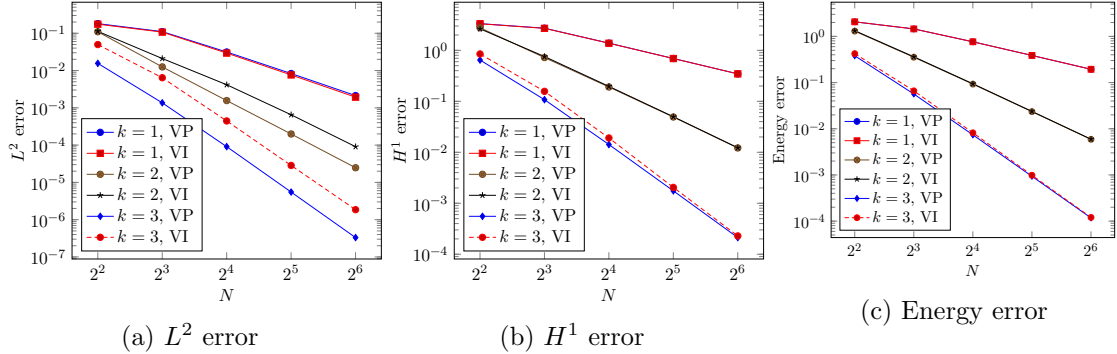

We give contour plots of the solution obtained using quadratic Bernstein basis functions on a $16\times 16$ mesh in Figure~\ref{fig:mmsplot}.
The solution of the linear variational problem in Figure~\ref{mmsvpplot} shows some regions of undershoot in the top left corner and along the line $y=0.5$, where the solution is zero.
In contrast, the solution of the variational inequality shown in Figure~\ref{mmsviplot} shows a uniformly nonnegative solution.
The difference between these two solutions is given in Figure~\ref{mmsdiffplot}. 

\begin{figure}
  \centering
  \begin{subfigure}[t]{0.32\textwidth}
    \includegraphics[width=\textwidth]{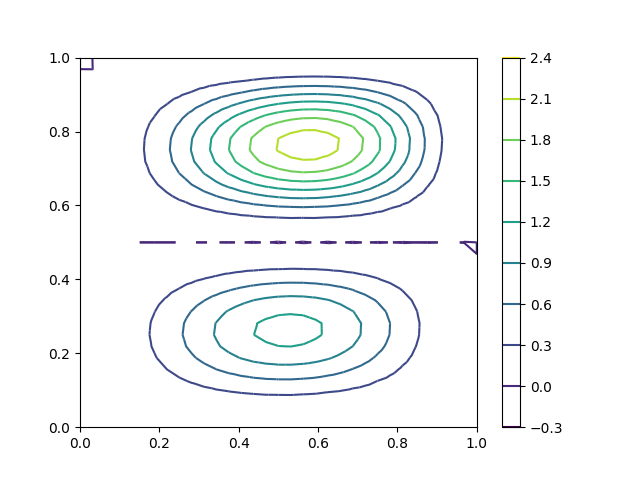}
    \caption{Variational problem}
    \label{mmsvpplot}
  \end{subfigure}
  \begin{subfigure}[t]{0.32\textwidth}
    \includegraphics[width=\textwidth]{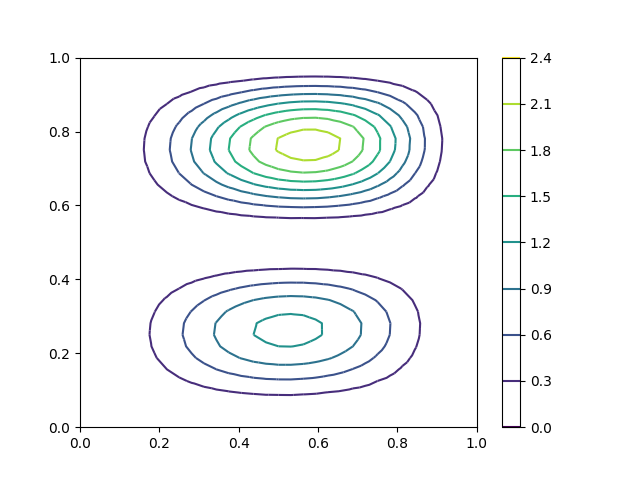}
    \caption{Variational inequality}
    \label{mmsviplot}
  \end{subfigure}
  \begin{subfigure}[t]{0.32\textwidth}
    \includegraphics[width=\textwidth]{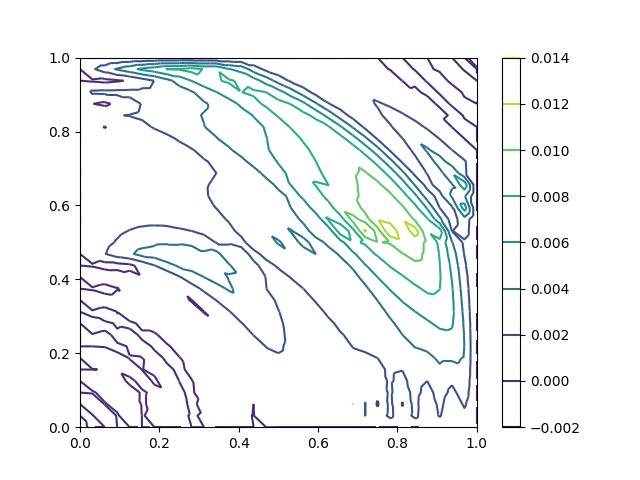}
    \caption{Difference}
    \label{mmsdiffplot}
  \end{subfigure}  
  \caption{Plots of solution to the variational problem, variational inequality, and their difference using quadratic Bernstein polynomials on a $16 \times 16$ mesh.  Notably, the variational inequality gives a uniformly nonnegative solution, unlike the solution of the variational problem.}
  \label{fig:mmsplot}
\end{figure}

Now, we also applied our methods to a problem where the analytic solution is not known.
We keep the same diffusion tensor $\kappa$ as before, but take the forcing function to be
\begin{equation}
  \label{eq:roughf}
  f(x, y) = \begin{cases} 1, & x, y \in [\tfrac{3}{8}, \tfrac{5}{8}]^2, \\
    0, & \text{otherwise}.
  \end{cases}
\end{equation}
This forcing function lies in $H^s$ for $s < \tfrac{1}{2}$ so that the solution is in $H^{2+s}$ for $s<\tfrac{1}{2}$.
Although this regularity does not allow high orders of convergence, the variational problem and inequality are still well-posed with higher-degree polynomials, and we may still hope to obtain better solutions than with piecewise linear polynomials on the same mesh.

\begin{figure}
  \centering
  \begin{subfigure}[t]{0.32\textwidth}
    \includegraphics[width=\textwidth]{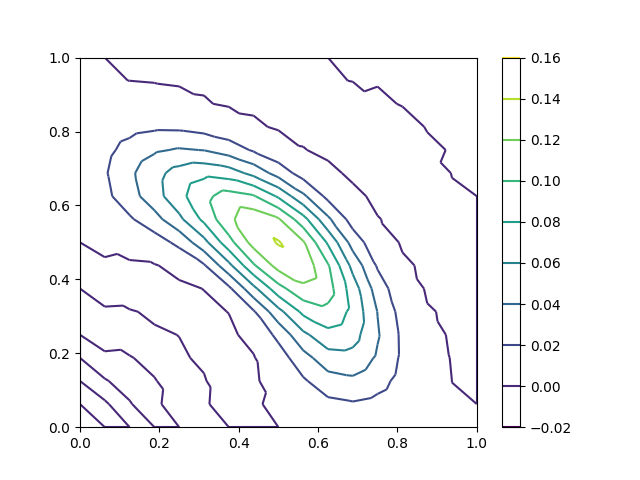}
    \caption{VP, $k=1$}
    \label{l2forcevpplot1}
  \end{subfigure}
  \begin{subfigure}[t]{0.32\textwidth}
    \includegraphics[width=\textwidth]{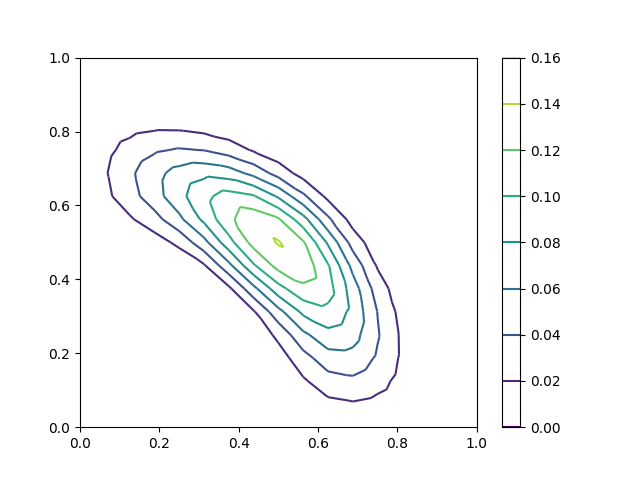}
    \caption{VI, $k=1$}
    \label{l2forceviplot1}
  \end{subfigure}
  \begin{subfigure}[t]{0.32\textwidth}
    \includegraphics[width=\textwidth]{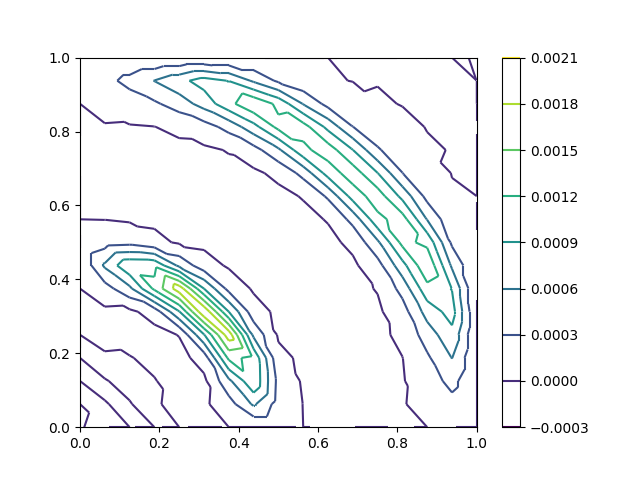}
    \caption{Difference, $k=1$}
    \label{l2forcediffplot1}
  \end{subfigure} \\
  \begin{subfigure}[t]{0.32\textwidth}
    \includegraphics[width=\textwidth]{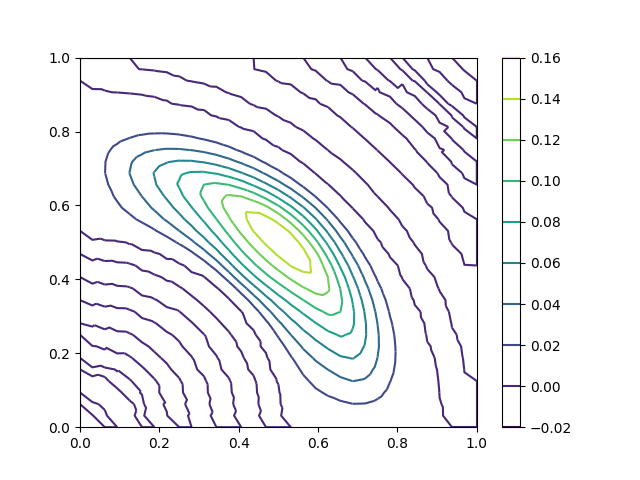}
    \caption{VP, $k=2$}
    \label{l2forcevpplot2}
  \end{subfigure}
  \begin{subfigure}[t]{0.32\textwidth}
    \includegraphics[width=\textwidth]{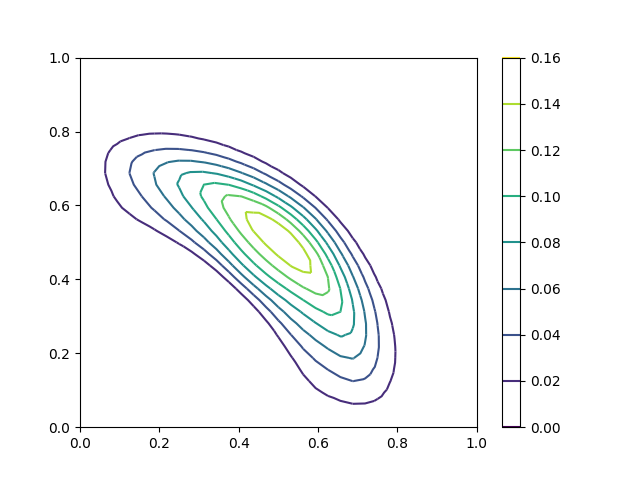}
    \caption{VI, $k=2$}
    \label{l2forceviplot2}
  \end{subfigure}
  \begin{subfigure}[t]{0.32\textwidth}
    \includegraphics[width=\textwidth]{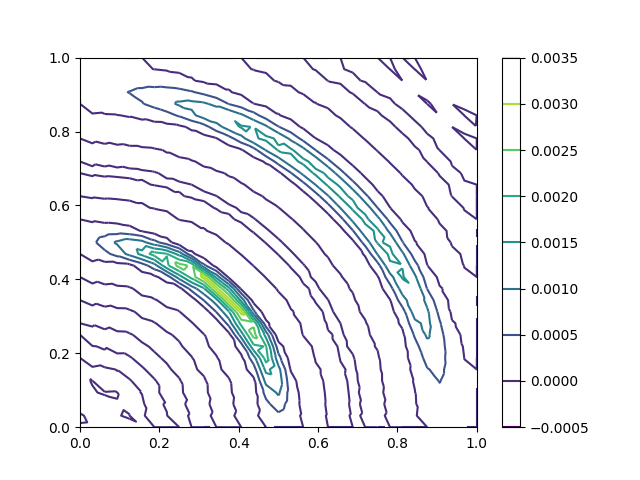}
    \caption{Difference, $k=2$}
    \label{l2forcediffplot2} 
  \end{subfigure} \\
  \begin{subfigure}[t]{0.32\textwidth}
    \includegraphics[width=\textwidth]{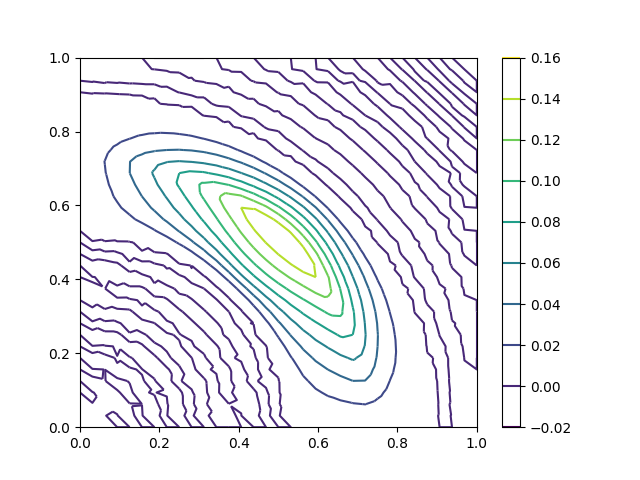}
    \caption{VP, $k=3$}
    \label{l2forcevpplot3} 
  \end{subfigure}
  \begin{subfigure}[t]{0.32\textwidth}
    \includegraphics[width=\textwidth]{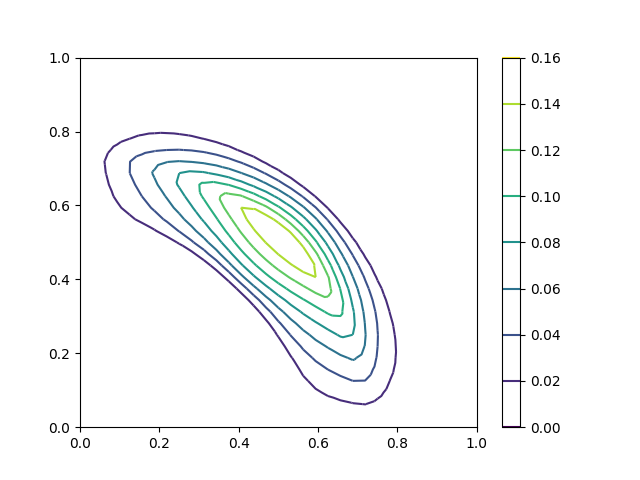}
    \caption{VI, $k=3$}
    \label{l2forceviplot3}
  \end{subfigure}
  \begin{subfigure}[t]{0.32\textwidth}
    \includegraphics[width=\textwidth]{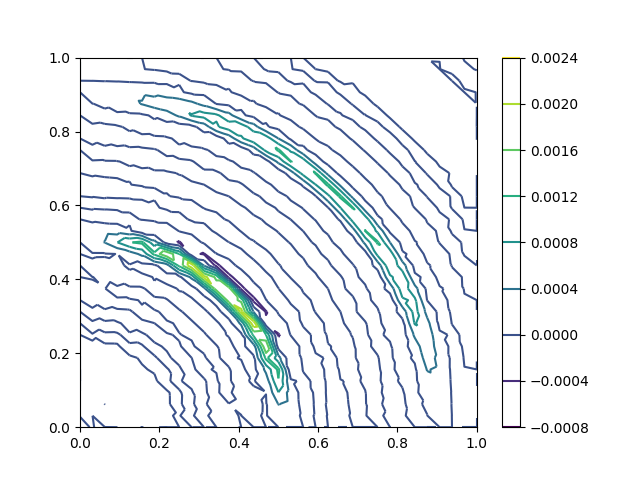}
    \caption{Difference, $k=3$}
    \label{l2forcediffplot3} 
  \end{subfigure} \\  
  \caption{Solutions of the variational problem (VP), variational inequality (VI), and their difference using linear, quadratic, and cubic basis functions with forcing function~\eqref{eq:roughf}.}
  \label{fig:l2forceplot}
\end{figure}

Figure~\ref{fig:l2forceplot} plots the solutions obtained from solving both the variational problem and inequality with linear, quadratic, and cubic Bernstein polynomials.
In each case, we see oscillations producing negative values in the solutions to the variational problem, while these are absent from the solutions to the variational inequalities.
Also, we see some increase in resolution with higher-degree polynomials.  

\subsection{Stationary convection-diffusion}
We now turn to the convection-diffusion equation~\eqref{eq:convdiff} and its SUPG formulation~\eqref{eq:supg}.
To demonstrate the effect of higher-order discretization in both a variational problem and bounds-constrained variational inequality, we take the two-dimensional benchmark given in~\cite{chang2017variational}.
Here, the domain $\Omega$ is the unit square with the square $[\tfrac{4}{9}, \tfrac{5}{9}]^2$ removed from the center.
The convective velocity is given by the spatially varying field
\begin{equation}
  \beta = (\cos \pi y^2, \sin 2 \pi x + \cos 2 \pi x^2),
\end{equation}
and $\kappa$ is given by the diffusion/dispersion tensor
\begin{equation}
  \kappa = \left( \alpha_T \| \beta \| + D_M \right) I + \left( \alpha_L - \alpha_T \right) \frac{\beta \otimes \beta}{\| \beta \|}.
\end{equation}
Here, $\alpha_L$ and $\alpha_T$ denote the longitudinal and transverse dispersivity, respectively, and $D_m$ is the molecular diffusivity.
We set these with values
\begin{equation}
  \alpha_L = 10^{-1}, \ \ \ \alpha_T = 10^{-5}, \ \ \ D_m = 10^{-9}.
\end{equation}
We apply Dirichlet boundary conditions, with $u=0$ on the exterior and $u=1$ on the boundary of the deleted square.

Figure~\ref{fig:supgcoarse} shows the solution using the linear variational problem and variational inequalities on a relatively coarse mesh consisting of 5,440 triangles and 2,832 vertices.
With the variational problem, we see widespread oscillations giving negative solutions in a large portion of the domain, and solutions also rise above 1 near the inner boundary.
The features are sharper, but oscillations worse, with quadratic elements than linear ones.
The solution to the variational inequality maintains this comparable resolution while removing the oscillatory behavior and bounds violations.
Figures~\ref{supg1diff} and~\ref{supg2diff} show the differences between the solutions to the variational problem and inequality.
Unfortunately, the PETSc variational inequality solver did not converge with cubic basis functions, so at this time we are unable to report on higher-order approximations.

\begin{figure}
  \centering
  \begin{subfigure}[t]{0.30\textwidth}
    \includegraphics[width=\textwidth]{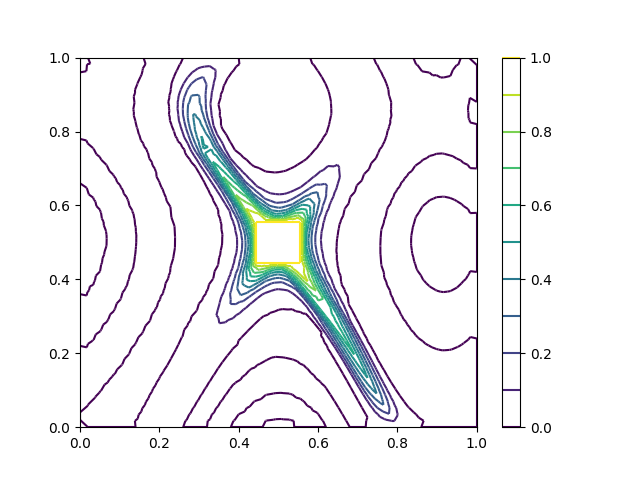}
    \caption{VP, $k=1$}
    \label{supgvp1}    
  \end{subfigure}
  \hfill
  \begin{subfigure}[t]{0.30\textwidth}
    \includegraphics[width=\textwidth]{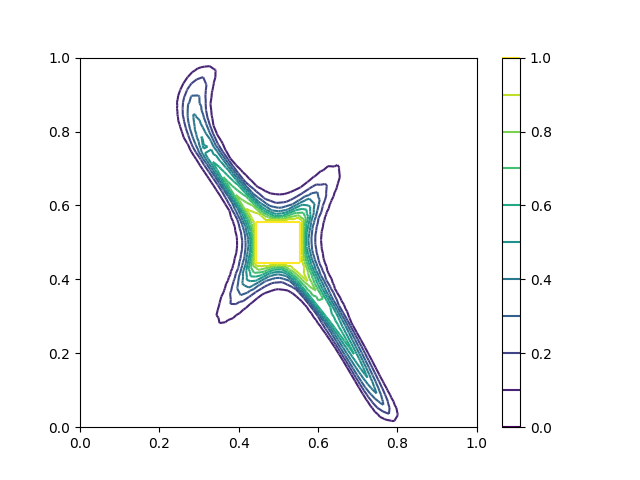}
    \caption{VI, $k=1$}
    \label{supgvi1}
  \end{subfigure}
  \hfill
  \begin{subfigure}[t]{0.30\textwidth}
    \includegraphics[width=\textwidth]{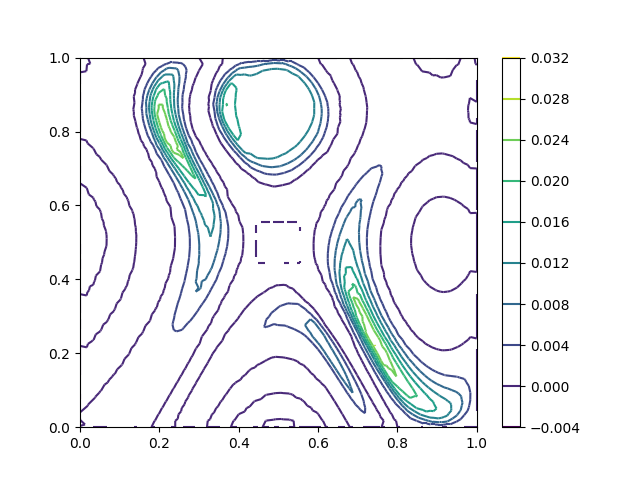}
    \caption{Diff, $k=1$}
    \label{supg1diff}
  \end{subfigure} \\  
  \begin{subfigure}[t]{0.30\textwidth}
    \includegraphics[width=\textwidth]{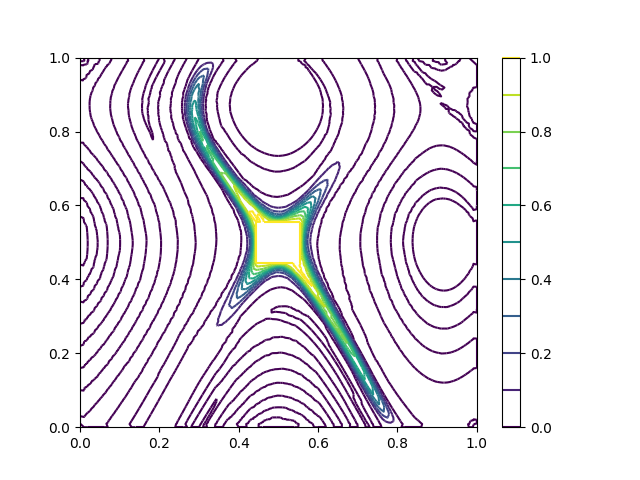}
    \caption{VP, $k=2$}
    \label{supgvp2}
  \end{subfigure}
  \hfill
  \begin{subfigure}[t]{0.30\textwidth}
    \includegraphics[width=\textwidth]{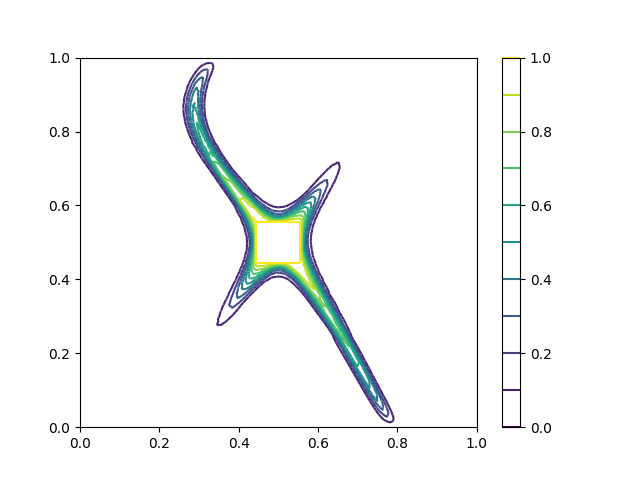}
    \caption{VI, $k=2$}
    \label{supgvi2}
  \end{subfigure}
  \hfill
  \begin{subfigure}[t]{0.30\textwidth}
    \includegraphics[width=\textwidth]{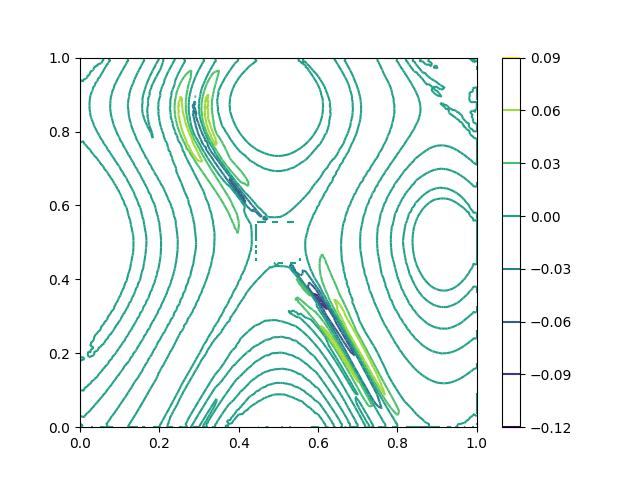}
    \caption{Diff, $k=2$}
    \label{supg2diff}
  \end{subfigure} \\    
  \caption{Solutions of the convection-diffusion problem using the SUPG finite element discretization (VP) and variational inequality (VI) with linear and quadratic Bernstein approximations on a coarse mesh of 5,440 triangles and 2,832 vertices.}
  \label{fig:supgcoarse}
\end{figure}

We repeated this experiment on a once-refined mesh with 21,760 triangles and 11,104 vertices, showing the result in Figure~\ref{fig:supgfine}, with the variational inequality again eliminating the bounds violations present in the variational problem.
The results are similar, but more sharply resolved, than on coarser mesh.
We note that quadratic elements on the coarse mesh also seem to give slightly sharper resolution than linears on the finer mesh.

\begin{figure}
  \centering
  \begin{subfigure}[t]{0.32\textwidth}
    \includegraphics[width=\textwidth]{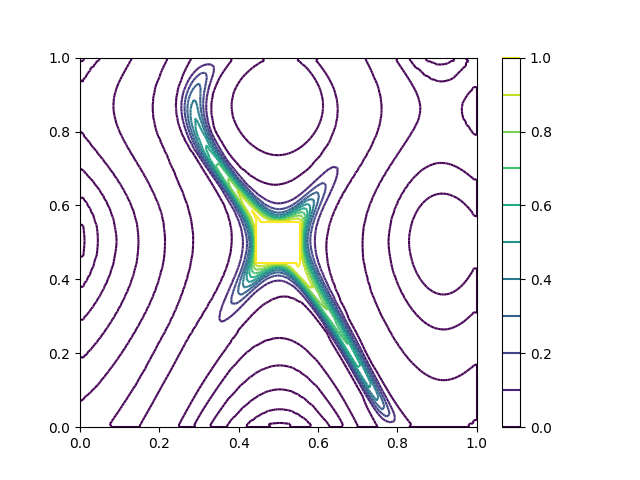}
    \caption{VP, $k=1$}
    \label{supgvp1fine}
  \end{subfigure}
  \begin{subfigure}[t]{0.32\textwidth}
    \includegraphics[width=\textwidth]{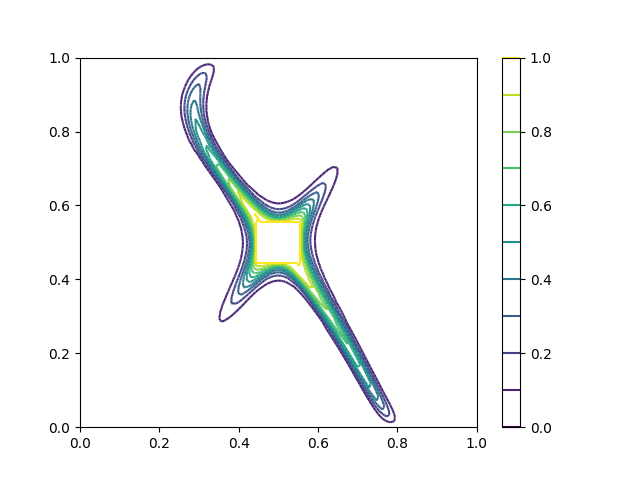}
    \caption{VI, $k=1$}
    \label{supgvi1fine}
  \end{subfigure}
  \begin{subfigure}[t]{0.32\textwidth}
    \includegraphics[width=\textwidth]{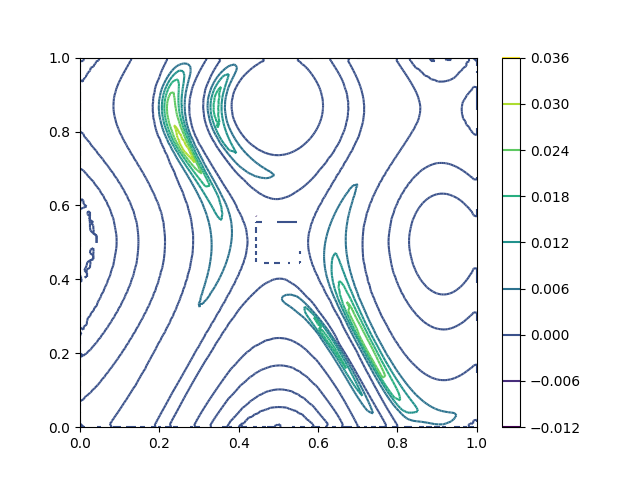}
    \caption{Diff, $k=1$}
    \label{supg1difffine}
  \end{subfigure} \\
  \begin{subfigure}[t]{0.32\textwidth}
    \includegraphics[width=\textwidth]{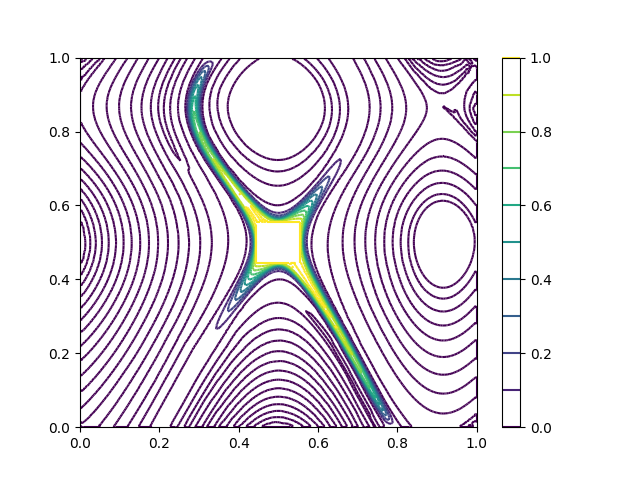}
    \caption{VP, $k=2$}
    \label{supgvp2fine}
  \end{subfigure}
  \begin{subfigure}[t]{0.32\textwidth}
    \includegraphics[width=\textwidth]{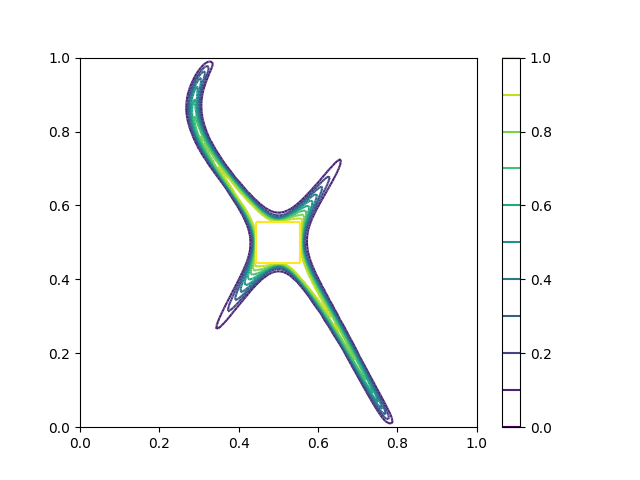}
    \caption{VI, $k=2$}
    \label{supgvi2fine}
  \end{subfigure}
  \begin{subfigure}[t]{0.32\textwidth}
    \includegraphics[width=\textwidth]{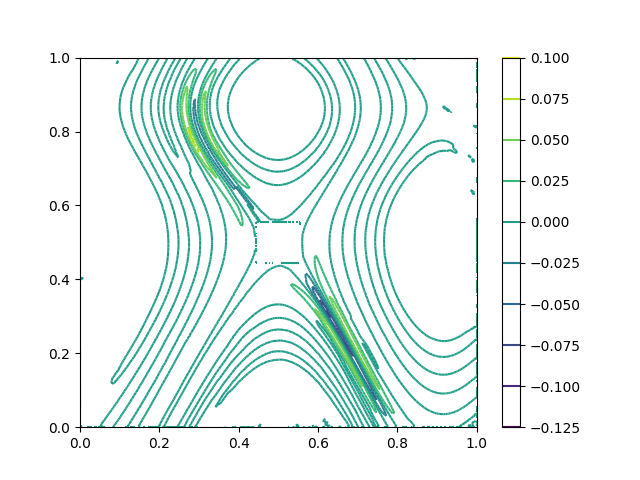}
    \caption{Diff, $k=2$}
    \label{supg2difffine}
  \end{subfigure}
  \caption{Solutions of the convection-diffusion problem using the SUPG finite element discretization (VP) and variational inequality (VI) with linear and quadratic Bernstein approximations on a finer mesh of 21,760 triangles and 11,104 vertices.}
  \label{fig:supgfine}
\end{figure}

\subsection{Time-dependent convection-diffusion}
Now, we return to the time-dependent convection-diffusion problem~\eqref{eq:tdcdgal}, where we apply our techniques to a rotating cone problem.
We take $\Omega = \left[-\tfrac{1}{2}, \tfrac{1}{2}\right]^2$, divided into a $ N \times N$ mesh of squares, each divided into two right triangles.
The diffusion coefficient is taken to be the scalar constant $\kappa = 10^{-4}$, and the convective velocity is given by the spatially varying divergence-free field
\begin{equation}
  \beta = \begin{bmatrix} -y \\ x \end{bmatrix},
\end{equation}
which creates a circular rigid rotation with period $2\pi$.
The initial condition for the problem is taken as a cone with of radius $0.1$ and height 1 centered at the point $(0.225, 0)$.

We discretize~\eqref{eq:tdcdgal} with the implicit midpoint rather than backward Euler method, giving a less diffusive and (formally) higher order method in time.  We take time step $\tau = \tfrac{1}{N}$.
The variational problem obtained at each time step is then approximated alternatively with a Galerkin finite element or discrete variational inequality.
In each case, we use continuous piecewise polynomials of degrees 1, 2, and 3, expressed in the Bernstein basis.

For the initial condition, we find the closest member of $V_h$ to the initial cone subject to the bounds constraints (i.e.~we solve a variational inequality), which is shown in Figure~\ref{fig:ic}.
Figure~\ref{fig:tdcdresults} shows the solutions obtained from the standard Galerkin method and the bounds-constrained variational inequality using various orders of spatial discretization.
We see that the variational problems, though stable, give significant violation of the bounds constraints, whild the variational inequality leads to very good solutions.
Increasing the polynomial degree leads to a sharper, more circular solution.

\begin{figure}
  \begin{center}
    \includegraphics[width=0.5\textwidth]{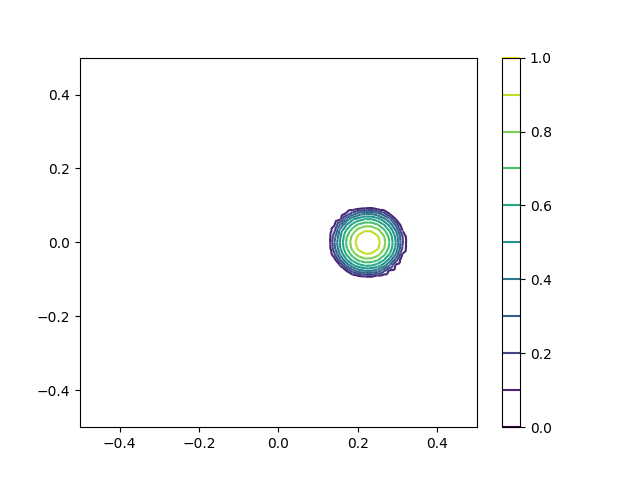}
  \end{center}
  \caption{Initial condition for rotating cone problem, approximated with cubic Bernstein polynomials subject to bounds constraints}
  \label{fig:ic}
\end{figure}

\begin{figure}
  \centering
  \begin{subfigure}[t]{0.30\textwidth}
    \includegraphics[width=\textwidth]{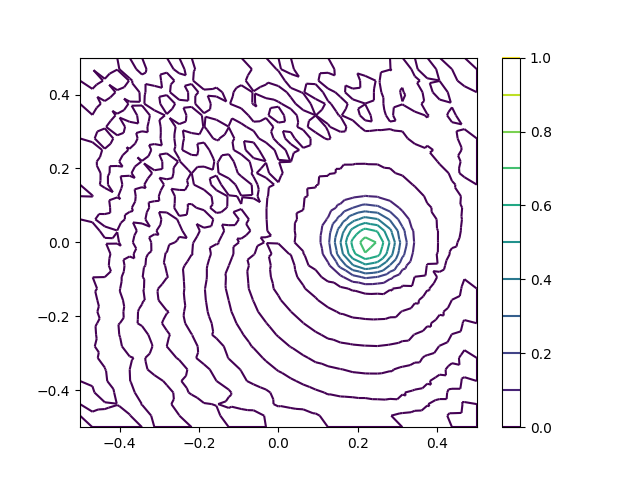}
    \caption{VP, $k=1$}
    \label{vpk1}    
  \end{subfigure}
  \hfill
  \begin{subfigure}[t]{0.30\textwidth}
    \includegraphics[width=\textwidth]{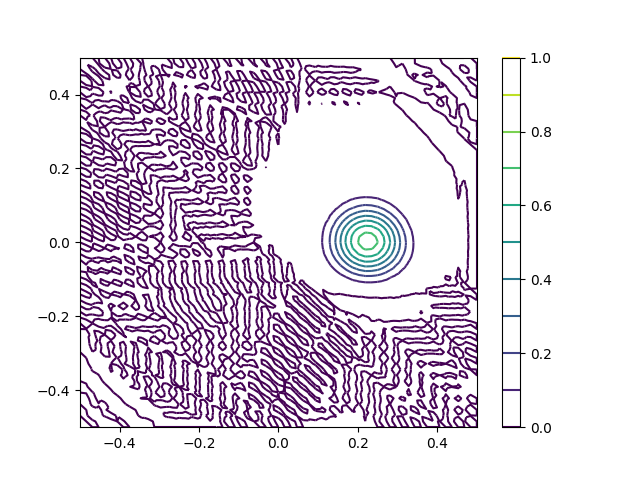}
    \caption{VP, $k=2$}
    \label{vpk2}
  \end{subfigure}
  \hfill
  \begin{subfigure}[t]{0.30\textwidth}
    \includegraphics[width=\textwidth]{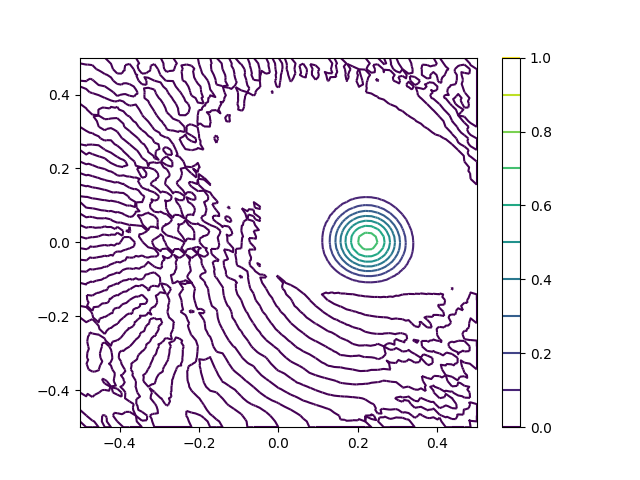}
    \caption{VP, $k=3$}
    \label{vpk3}
  \end{subfigure} \\
  \begin{subfigure}[t]{0.30\textwidth}
    \includegraphics[width=\textwidth]{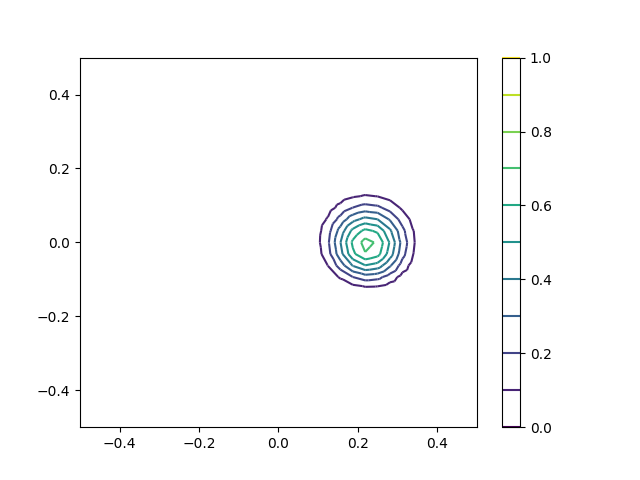}
    \caption{VI, $k=1$}
    \label{vik1}    
  \end{subfigure}
  \hfill
  \begin{subfigure}[t]{0.30\textwidth}
    \includegraphics[width=\textwidth]{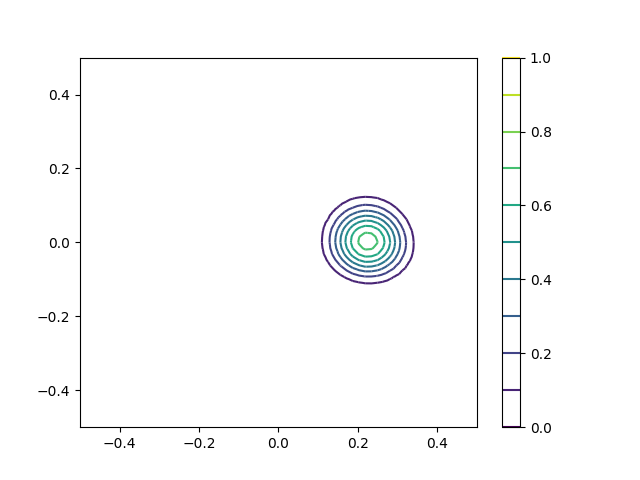}
    \caption{VI, $k=2$}
    \label{vik2}
  \end{subfigure}
  \hfill
  \begin{subfigure}[t]{0.30\textwidth}
    \includegraphics[width=\textwidth]{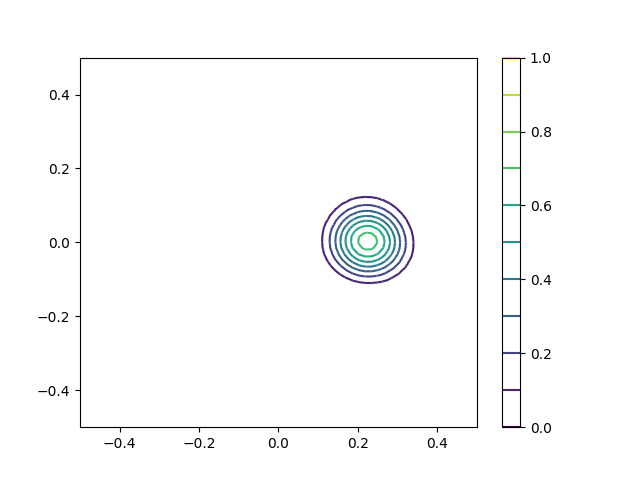}
    \caption{VI, $k=3$}
    \label{vik3}
  \end{subfigure}
  \caption{Rotating cone solutions after one rotation, comparing various orders of approximation to a discrete variational problem (VP) and variational inequality (VI).}
  \label{fig:tdcdresults}
\end{figure}

\section{Conclusions}
\label{sec:conc}
Linear variational inequalities can be used to enforce bounds constraints for partial differential equations discretized over finite element spaces.
In this paper, error estimates justify this approach, showing that the error obtained is comparable to the best approximation of the PDE solution in the set of discrete, bounds-respecting functions.
Moreover, restricted range approximation of smooth functions is shown to have comparable accuracy to unconstrained approximation.
Through the Bernstein basis, we are able to work with bounds-constrained polynomials.
Although applying the constraints to Bernstein coefficients does not produce all bounds-constrained polynomials, practical results indicate that this may be sufficient to produce genuinely higher-order approximations.

This paper leaves many directions for future research.
For one, our theoretical results still leave open the issue of characterizing the approximation power of constructing $U_h$ by imposing bounds constraints on the Bernstein control net rather than using all bounds-constrained members of the finite element space. 
Also, finding efficient solvers for higher-order discretizations of variational inequalities is a challenging problem.
We have only used one black-box algorithm, and have not studied other approaches~\cite{bueler2023full,de1996semismooth,hoppe1987multigrid}.
Further, the literature suggests the variational inequality framework combines well with more general discretizations and operators, but we have yet to address this from a theoretical standpoint.
Finally, variational inequalities can be formulated for time-dependent problems, but finding higher-order discretizations in both space and time seems to be an open issue.

\bibliographystyle{plain}
\bibliography{paper}

\end{document}